\newtheorem{theorem}{Theorem}[section]
\newtheorem{claim}[theorem]{Claim}
\newtheorem{fact}[theorem]{Fact}
\newtheorem{proposition}[theorem]{Proposition}
\newtheorem{lemma}[theorem]{Lemma}
\newtheorem{corollary}[theorem]{Corollary}
\newtheorem{question}[theorem]{Question}
\newtheorem*{question*}{Question}
\theoremstyle{definition}
\newtheorem{question+}[theorem]{Question}
\newtheorem{defn}[theorem]{Definition}
\theoremstyle{remark}
\newtheorem{notation}{Notation}
 \DeclareMathOperator{\intr}{int}
  \DeclareMathOperator{\bd}{bd}
\newcommand{\WR}{\widetilde{\cal R}}
\newcommand{\la}{\langle}
\newcommand{\ra}{\rangle}
\newcommand{\sub}{\subseteq}
\newcommand{\ldim}{\ensuremath{\textup{ldim}}}
\newcommand{\cal}[1]{\ensuremath{\mathcal{#1}}}
\newcommand{\Cal}[1]{\ensuremath{\mathcal{#1}}}
\newcommand{\cl}[1]{\begin{overline}{#1}\end{overline}}
\newcommand{\es}{\ensuremath{\emptyset}}
\newcommand{\dom}{\ensuremath{\textup{dom}}}
\newcommand{\sm}{\setminus}
\newcommand{\Z}{\mathbb{Z}}
\newcommand{\N}{\mathbb{N}}
\newcommand{\Q}{\mathbb{Q}}
\newcommand{\R}{\mathbb{R}}
\title[Structure theorem for i-minimal expansions of the real additive group]
{Structure theorem for i-minimal expansions of the real additive ordered group}
\begin{document}

\author {Alex Savatovsky}

\address{Department of Mathematics and Statistics, University of Konstanz, Box 216, 78457 Konstanz, Germany}

\email{alex.savatovsky@uni-konstanz.de}

\subjclass[2010]{Primary 03C64,  Secondary 22B99}
\keywords{o-minimality, tame expansions, d-minimality, i-minimality}

\date{\today}
\maketitle
\begin{abstract}
In this paper, we prove that for an o-minimal expansion of the real additive group $\cal R$ and a set $P\subseteq \R$  of dimension $0$ such that $\la\cal R,P\ra$ is sparse,  has definable choice and every definable set has interior or is nowhere dense then, for every definable set $X$, there is a family $\{X_t:\; t\in A\}$ definable in \Cal R and a set $S\subseteq A$  of dimension $0$ such that $X=\bigcup_{t\in S}X_t$. Moreover, in the d-minimal setting, there is a finite decomposition of $X$ into sets of the previous form such that for every $t\in S$ $X_t$ is relatively open in $\bigcup_{t\in S}X_t$. 
\end{abstract}
\section{introduction}
I-minimal structures have been introduced by Miller in \cite{Mil1} as the expansions of the real ordered line that does not define some dense set without interior (although not named at the time). The study of such structures is part of the bigger program of studying tameness in expansions of o-minimal structures (see \cite{Mil1}). 
\par D-minimal structures are a special case of i-minimal structures and have been first studied by van den Dries in \cite{VDD1} ('83) while proving a quantifer elimination result for the structure $\la\cl{\R},2^\Z\ra$. Since then,  many structures have been proven to be d-minimal and all of them are reducts of structures of the form $\la\cal R,P\ra^\#$ where $\cal R$ is an o-minimal expansion of the real ordered additive group, $P\subseteq \R$ is a set of dimension $0$ and $\la\cal R,P\ra^\#$ denotes the expansion of $\la\Cal R,P\ra$ by predicates for any subset of $P^k$ for any $k\in \N$ (for examples see \cite{Mil1}, \cite{MT}, \cite{FM2}, \cite{ToVo}).  
\smallskip
\par \textbf{For the rest of this paper, we fix \cal R, an o-minimal expansion of the real additive ordered group and  a set $P\subseteq \R$ of dimension $0$. We denote by $\WR=\la\Cal R,P\ra$.}
\smallskip
\begin{defn}
We denote by $\WR^\#$ the structure $\WR$ with a predicate for any subset of $P^k$ for any $k\in \N$.
\par We say that $\WR$ is {\em i-minimal} if every set definable in $\WR$ has interior or is nowhere dense. This denomination has been created by A. Forniasiero (see \cite{For2}) but note that C. Miller is calling it {\em noiseless}.
\par we say that $\WR$ is {\em sparse} if for every function $f:\R^k\to \R$ definable in $\cal R$, $f(P^k)$ has no interior. 
\par We say that $\WR$ is {\em d-minimal} if given a family of sets of topological-dimension $0$, there is a uniform finite decomposition of its elements into discrete sets. It is easy to see that d-minimal structures are i-minimal and sparse.
\par Let $X$ be a set definable in $\WR$. We say that  a definable family $\{X_t:\; t\in S\}$ is an 
\cal R-decomposition of $X$ if:
\begin{enumerate}
\item $X=\bigsqcup_{t\in S}X_t$
\item there is a  family of o-minimal cells $\{X_t:\; t\in A\}$ definable in \Cal R such that $S\subseteq A$
\item $S$ has topological-dimension $0$.
\end{enumerate}
\par A definable set $X$ is called an {\em \cal R-embedded manifold} (\Cal R-EM) if there is $\{X_t:\; t\in S\}$, an \cal R-decomposition of $X$ such that: \begin{enumerate}
\item there is a projection $\pi$, for every $t\in  S $,  $X_t$ is the graph of a continuous function over $\pi(X_t)$ and $\pi(X_t)$ is open and connected.
\item For every $t,t'\in S$, either $\pi(X_t)=\pi(X_{t'})$ or $\pi(X_t)\cap \pi(X_t')=\es$.
\end{enumerate}
As a convention, when talking about an \cal R-deomposition of an \cal R-EM, we always mean that this decomposition satisfies $1$ and $2$.
\par We say that $\WR$ has {\em the decomposition into finitely many \cal R-EM property} (\cal R-DEM) if every set definable in $\WR$ admits a decomposition into finitely many \Cal R-EM. 
\end{defn}
\par \textbf{For the rest of this paper, we assume that $\WR$ is i-minimal, sparse and has definable choice.}
\smallskip
The main theorem of this paper is the following:
\begin{theorem}\label{main}
$\WR^\#$ has \cal R-DEM.
\end{theorem}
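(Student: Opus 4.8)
The plan is to prove Theorem~\ref{main} by induction on the o-minimal dimension of the ambient space, reducing an arbitrary $\WR^\#$-definable set $X \subseteq \R^n$ first to a single $\cal R$-decomposition and then upgrading that decomposition to a finite union of $\cal R$-EM's. The starting point is a cell-decomposition-style analysis of $\WR^\#$-definable sets: because $P$ has dimension $0$ and $\WR$ is sparse, every $\WR^\#$-definable set should be expressible using $\cal R$-definable families indexed by subsets of $P^k$. Concretely, I would first establish (or invoke, if it appears earlier) a normal form: any $X$ definable in $\WR^\#$ can be written as $\bigcup_{\bar s \in S} X_{\bar s}$ where $\{X_{\bar t} : \bar t \in A\}$ is an $\cal R$-definable family of o-minimal cells and $S \subseteq A \cap P^k$ has $\cal R$-dimension $0$ (using that $P^k$ has dimension $0$ and sparseness controls images). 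This gives an $\cal R$-decomposition in the sense of the definition; the content of the theorem is then to massage such a decomposition into finitely many $\cal R$-EM's.

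Next I would work within a fixed $\cal R$-decomposition $\{X_t : t \in S\}$ and reorganize it coordinatewise. For each cell $X_t$ there is a canonical projection $\pi_t$ (determined by which coordinates are ``free'' in the cell) exhibiting $X_t$ as the graph of a continuous $\cal R$-definable function over an open cell $\pi_t(X_t)$; since there are only finitely many coordinate projections $\R^n \to \R^d$, I can partition $S$ into finitely many pieces $S_1, \dots, S_N$ according to which projection is attached, so on each piece condition~(1) of the $\cal R$-EM definition is met. The remaining work is condition~(2): within a piece $S_j$ with common projection $\pi$, the base sets $\{\pi(X_t) : t \in S_j\}$ form an $\cal R$-definable family of open cells, and I need to refine so that any two bases are equal or disjoint. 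Here I would use definable choice in $\WR$ together with the o-minimal structure of the family $\{\pi(X_t)\}$: by o-minimal cell decomposition of the total space $\bigcup_t \{t\} \times \pi(X_t)$ and a trimming argument, one can arrange the fibers to be pairwise equal-or-disjoint at the cost of increasing the (still $0$-dimensional) index set and further finite partitioning. Connectedness of the bases can be arranged by passing to connected components, which in the o-minimal setting is a finite, uniformly definable operation.

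The step I expect to be the main obstacle is precisely this disjointification of the projected base cells while keeping the index set $0$-dimensional and the decomposition finite. The difficulty is that naively splitting overlapping open cells into their intersection and differences can destroy openness or connectedness, and iterating the splitting could blow up the index set's dimension or fail to terminate. The key idea to handle this is to stratify the family $\{\pi(X_t)\}_{t \in S}$ using o-minimal trivialization: there is an $\cal R$-definable finite partition of the ``moduli'' direction such that over each piece the family is definably trivial, so the bases are, up to an $\cal R$-definable homeomorphism, constant in $t$; after this trivialization the equal-or-disjoint condition becomes automatic on each piece, and sparseness plus the dimension-$0$ hypothesis on $S$ ensure that pulling back through the trivialization keeps the new index set $0$-dimensional. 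One then checks that the graphs $X_t$ transported along this trivialization remain graphs of continuous $\cal R$-definable functions, which follows from the $\cal R$-definability of the trivializing homeomorphism.

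Finally, I would assemble the global statement: starting from $X$, produce the normal-form $\cal R$-decomposition, partition by projection, trivialize each piece to achieve the equal-or-disjoint condition, and replace each base by its connected components; the resulting finitely many pieces are $\cal R$-EM's by construction, and their union is $X$. The induction on dimension enters when the trivialization or the fiberwise analysis requires understanding lower-dimensional $\WR^\#$-definable sets (e.g.\ the ``boundary'' behavior of the cells, or the index set $S$ itself viewed as a definable subset of $P^k \subseteq \R^k$), at which point the inductive hypothesis supplies the needed $\cal R$-DEM decompositions. The d-minimal refinement mentioned in the abstract — that one can further demand each $X_t$ be relatively open in $\bigcup X_t$ — would follow by an additional application of d-minimality to the $0$-dimensional index set $S$, decomposing it into finitely many discrete pieces so that the corresponding unions of graphs are locally finite and hence each graph is relatively open.
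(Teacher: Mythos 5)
Your proposal has a genuine gap, and it sits exactly where the real content of the theorem is. You begin by saying you would ``first establish (or invoke, if it appears earlier) a normal form: any $X$ definable in $\WR^\#$ can be written as $\bigcup_{\bar s\in S}X_{\bar s}$ where $\{X_{\bar t}:\bar t\in A\}$ is an $\cal R$-definable family of cells and $S$ is small.'' That normal form is not something available to invoke; it is essentially the theorem itself, and sparseness does not hand it to you. What sparseness gives (via the Friedman--Miller result, Fact \ref{FM}) is only a \emph{containment}: each fiber of $X$ over a suitable projection is contained in $\cl{f(x,y,P^k)}$ for an $\cal R$-definable $f$, i.e.\ $X$ sits inside the \emph{closure} of a small union of graphs $\bigcup_{t\in P^k}\Gamma(f_t)$. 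The closure is the problem: limit points of the small family need not be members of the family indexed by $P^k$, so you do not yet have a small union of members of an $\cal R$-definable family, which is what an $\cal R$-decomposition requires. The paper's proof spends most of its effort precisely on closing this gap: it shows (Claim \ref{cl 5}, using Dini's theorem and Fact \ref{Cynk}) that pointwise limits of decreasing sequences of the graphs are again graphs of continuous functions lying inside $Y$, and then uses the $\cal R$-definability of the family of Hausdorff limits of an $\cal R$-definable family of compact sets (Theorem \ref{Hausdorff fam}) to realize all these limit graphs as members of a single $\cal R$-definable family, with the new index set still of dimension $0$ because each limit graph is determined by a single fiber of dimension $0$. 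Your proposal never confronts this step, and the subsequent reorganization you describe (partition by projection, disjointification of bases, connected components) is the comparatively routine part, corresponding to Claims \ref{cl10}, \ref{cl1} and \ref{cl 00} of the paper.

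Two further omissions follow from the same source. First, the Hausdorff-limit argument needs the family to live in a compact region; when $\cal R$ expands the real field one can rescale by $x\mapsto x/(1+|x|)$, but when $\cal R$ is semibounded no such pole exists, and the paper must run a separate argument through the cone decomposition of semibounded sets (Proposition \ref{lm sb} and Claims \ref{cl000}, \ref{cl111}). Since the theorem is stated for expansions of the real additive ordered group, your argument would have to handle this dichotomy, and the trivialization machinery you lean on is also less innocuous in the purely additive setting. Second, you do not treat the full-dimensional case ($\dim X=m$ in $\R^m$, i.e.\ $X$ with interior), which the paper handles by decomposing $\bd(X)$ inductively and then defining the regions between consecutive boundary leaves (the maps $\phi$, $\delta$, $\Delta$); some such argument is needed because an open $\WR^\#$-definable set need not be a small union of $\cal R$-cells in any obvious way without it.
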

As an easy consequence we establish a stronger version of Theorem \ref{main} for d-minimal structures.
\begin{theorem}\label{main2}
If $\WR$ is d-minimal then for every definable set $X$, there is a decomposition of $X$ into finitely many \cal R-EM $X_1,\ldots,X_n$ such that for every $i$ there is $\{X_t:\, t\in S\}$, some \Cal R-decomposition of  $X_i$ such that for every $t\in S$, $X_t$ is relatively open in $X_i$. 
\end{theorem}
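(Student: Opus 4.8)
The plan is to deduce Theorem \ref{main2} from Theorem \ref{main} by adding d-minimality as the extra hypothesis that lets us promote the $\cal R$-decomposition pieces to be relatively open. Fix a definable set $X$. By Theorem \ref{main}, write $X=X_1\sqcup\cdots\sqcup X_n$ with each $X_i$ an $\cal R$-EM, and fix for each $i$ an $\cal R$-decomposition $\{X_t:\ t\in S_i\}$ satisfying conditions $1$ and $2$ of the definition of $\cal R$-EM (so each $X_t$ is the graph of a continuous function over an open connected $\pi_i(X_t)$, and the base sets $\pi_i(X_t)$ are pairwise equal or disjoint). It suffices to treat a single $\cal R$-EM, so I would reduce to the case $X=X_i$, $S=S_i$, $\pi=\pi_i$, and show that $X$ decomposes into finitely many $\cal R$-EM each of which is relatively open in itself after refining — i.e., each piece is relatively open in the union of its own cells.

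First I would push everything down to the base via $\pi$. The family $\{\pi(X_t):\ t\in S\}$ is an $\cal R$-definable family of open connected sets, and by condition $2$ the distinct base sets among them form a partition of $\pi(X)$ into open connected pieces; let $B=\bigcup_{t\in S}\pi(X_t)$. Because $S$ has topological-dimension $0$, the quotient identifying $t\sim t'$ when $\pi(X_t)=\pi(X_{t'})$ still has a $0$-dimensional set of classes, so $B$ is a union of a $0$-dimensional-indexed family of $\cal R$-cells. The key move is to apply d-minimality to this $0$-dimensional index set: d-minimality gives a uniform finite decomposition of $S$ (indexing the base pieces) into discrete sets $D_1,\ldots,D_m$. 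For a fixed discrete $D_j$, the union $U_j:=\bigcup_{t\in D_j}\pi(X_t)$ is open (each $\pi(X_t)$ is open, and a discrete union of open sets whose pairwise intersections are empty is open), and — crucially — each $\pi(X_t)$ with $t\in D_j$ is a connected component of $U_j$, hence relatively open in $U_j$; its graph $X_t$ is then relatively open in $\bigcup_{t\in D_j}X_t$ because $\pi$ restricted to this union is a homeomorphism onto $U_j$ (each piece being a continuous graph). So setting $Y_j:=\bigcup_{t\in D_j}X_t$ we get $X=Y_1\sqcup\cdots\sqcup Y_m$, each $Y_j$ is an $\cal R$-EM with $\cal R$-decomposition $\{X_t:\ t\in D_j\}$, and each $X_t$ is relatively open in $Y_j$.

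The step I expect to be the main obstacle is verifying that d-minimality actually applies cleanly to the \emph{index} set $S$ of the $\cal R$-decomposition rather than to $X$ itself: one must check that $S$ (after quotienting by the equivalence $\pi(X_t)=\pi(X_{t'})$) is genuinely a $0$-dimensional definable set in $\WR$, and that "uniform finite decomposition into discrete sets" of that index set translates, via the definable family $t\mapsto \pi(X_t)$, into the topological statement that the base pieces over each discrete block are pairwise "topologically separated" enough to be connected components of their union. The subtlety is that a discrete family of open sets need not have open union unless the sets are pairwise disjoint (which condition $2$ guarantees) and, more importantly, that no piece has a point in the closure of another piece within $U_j$; here I would use that each $\pi(X_t)$ is open and connected together with discreteness of the index to rule out accumulation, possibly shrinking by one more application of cell decomposition in $\cal R$ and definable choice to separate the finitely-per-block base sets. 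Once that separation is in hand, the homeomorphism $\pi|_{Y_j}$ transfers relative openness from base to graph for free, and reassembling the $n$ original $\cal R$-EM pieces and their $m$ sub-blocks gives the desired finite decomposition.
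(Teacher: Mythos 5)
Your reduction to a single $\cal R$-EM and the idea of using d-minimality to split into finitely many blocks has the right general shape, but you apply d-minimality to the wrong object, and the step that is supposed to deliver relative openness fails. The real obstruction in an $\cal R$-EM is vertical: over a single open base $W=\pi(X_t)$ there are in general infinitely many graphs stacked on top of one another (e.g.\ $X=(0,1)\times 2^{\Z}$ in $\la\overline{\R},2^{\Z}\ra$), and the issue is whether these stacked graphs accumulate onto one another. Your decomposition acts only at the level of the bases; but since the sets $\pi(X_t)$ are open in $\R^n$ and pairwise equal or disjoint, each of them is automatically relatively open in any union of them, so the base-level part of the argument is vacuous, while it does nothing to separate graphs sharing a common base. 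Concretely, the claim that $\pi$ restricted to $Y_j=\bigcup_{t\in D_j}X_t$ is a homeomorphism onto $U_j$ is false as soon as two indices in the same block $D_j$ have equal bases, and with only finitely many blocks this cannot be avoided when infinitely many graphs lie over one base. Moreover, discreteness of the index set $S\subseteq\R^p$ (or of its quotient by equality of bases) carries no topological information about the graphs themselves: the parametrization is arbitrary, and a family of graphs indexed by a discrete parameter set can perfectly well accumulate onto one of its members. Your closing remark that discreteness of the index rules out accumulation is precisely the unproved, and in general false, point.

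The paper instead applies d-minimality to the definable family of fibers $\pi^{-1}(x)\cap X$, $x\in\pi(X)$, which are sets of dimension $0$: d-minimality gives a uniform finite decomposition of these fibers into discrete sets, hence a finite definable decomposition of $X$ itself into pieces on which every fiber is discrete, and on such a piece each graph of the $\cal R$-decomposition is relatively open --- this fiberwise separation is exactly what your argument lacks. To repair your proof, the decomposition must be carried out in the fiber direction, uniformly in the base point, rather than on the index set.
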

\par Theorem \ref{main2} can be seen both as a generalization and a reduction of the decomposition into special manifold of Thamrongthanyalak (\cite[Theorem B]{Tam}). A generalization because it does not assume anymore that \cal R expands the real field and a reduction because it only concerns the expansions of an o-minimal structure by a unary set of dimension $0$. 
Anyway, having the uniform definability of the connected components of the special manifolds plays a crucial role in some applications such as:
\begin{theorem}\label{app1}
Let $\WR$ be d-minimal. If $f$ is a smooth  function definable in $\WR^\#$, over a domain that is definable in \Cal R then $f$ is definable in $\cal R$. 
\end{theorem}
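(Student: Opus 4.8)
The plan is to feed $\mathrm{graph}(f)$ into Theorem~\ref{main2} and then exploit smoothness. We may assume $D$ is a nonempty open connected subset of $\R^n$ with $n\ge 1$ (for $n=0$ the statement is immediate, and the general case reduces to this by passing to the $\cal R$-definable manifold pieces of a cell decomposition of $D$ and composing with $\cal R$-definable charts); we may also assume $f$ is non-constant. Then $\Gamma:=\mathrm{graph}(f)\subseteq\R^{n+1}$ is a connected $C^\infty$ $n$-manifold definable in $\WR^\#$, and the coordinate projection $\pi$ onto the first $n$ coordinates restricts to a homeomorphism $\pi\res\Gamma\colon\Gamma\to D$.

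Apply Theorem~\ref{main2} to $\Gamma$: we obtain $\Gamma=X_1\sqcup\dots\sqcup X_m$, each $X_i$ an $\cal R$-EM with an $\cal R$-decomposition $\{X_t:t\in S_i\}$ into cells $X_t$ relatively open in $X_i$, each $X_t$ being the graph of a continuous $\cal R$-definable function over an open connected subset of some $\R^{d_i}$; in particular $X_i$ is pure of dimension $d_i$ and $\dim S_i=0$. Put $\Gamma^{<n}:=\bigcup\{X_i:d_i<n\}$. Since $\pi\res\Gamma$ is a homeomorphism and each $\pi\res X_t$ is injective, $\pi(\Gamma^{<n})$ is a set definable in $\WR^\#$ that is a finite union of sets $\bigcup_{t\in S_i}\pi(X_t)$ with $\dim\pi(X_t)=d_i<n$ taken over the $0$-dimensional index sets $S_i$; so $\dim\pi(\Gamma^{<n})<n$ by the dimension theory of $\WR^\#$, hence $\pi(\Gamma^{<n})$ has empty interior in $\R^n$, hence it is nowhere dense (every set definable in $\WR^\#$ has interior or is nowhere dense), hence $\Gamma^{<n}$ is nowhere dense in $\Gamma$.

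Now fix $i$ with $d_i=n$ and $t\in S_i$, and set $V_t:=\pi(X_t)\subseteq D$. As $X_t\subseteq\Gamma=\mathrm{graph}(f)$ we have $X_t=\{(x,f(x)):x\in V_t\}$, and since $\dim X_t=n$ the $\cal R$-definable set $V_t$ has nonempty interior $V_t^\circ:=\mathrm{int}(V_t)$. Using that $\pi\res\Gamma$ is a homeomorphism one checks that $O_t:=\mathrm{int}_\Gamma(X_t)=\Gamma\cap(V_t^\circ\times\R)=X_t\cap(V_t^\circ\times\R)$; thus $O_t$ is a nonempty open subset of $\Gamma$, it is definable in $\cal R$, and it is the graph of $f\res V_t^\circ$, so $f\res V_t^\circ$ is definable in $\cal R$. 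Moreover the $V_t$, $t\in S:=\bigsqcup_i S_i$, partition $D$, and $D':=\bigcup\{V_t^\circ:d_i=n,\ t\in S_i\}$ is open and dense in $D$: if an open ball $B\subseteq D$ missed $D'$, then $B=\bigsqcup_{t\in S}(B\cap V_t)$ would exhibit $B$ as a union, over the $0$-dimensional set $S$, of $\cal R$-definable sets each of dimension $<n$ — for $t$ with $d_i=n$ because $B\cap V_t^\circ=\es$ forces $B\cap V_t$ to have empty interior, for the other $t$ because $\dim V_t=d_i<n$ — whence $\dim B<n$ by the dimension inequality, contradicting $\dim B=n$. Consequently $\mathrm{graph}(f\res D')=\bigsqcup O_t$ is open and dense in $\Gamma$, and since $\Gamma$ is closed in $D\times\R$, the closure of $\mathrm{graph}(f\res D')$ in $D\times\R$ is exactly $\Gamma$.

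I expect the final step to be the main obstacle: passing from ``$f$ coincides, locally on the dense open $\cal R$-definable set $D'$, with functions definable in $\cal R$'' to ``$f\res D''$ is definable in $\cal R$ for some dense open $\cal R$-definable $D''\subseteq D$''. Granting this, if $g$ is an $\cal R$-definable function agreeing with $f$ on $D''$, then $f$ and $g$ agree on a dense subset and are both continuous, so $\mathrm{graph}(f)$ is the closure in $D\times\R$ of $\mathrm{graph}(g\res D'')$, which is definable in $\cal R$; hence $f$ is definable in $\cal R$. To carry out the upgrade I would use that there are only \emph{finitely many} EM's $X_i$ and that the cell functions $f\res V_t^\circ$ all come from the single $\cal R$-definable family $\{X_t:t\in A\}$ underlying the $\cal R$-decomposition, so that the ``gluing data'' of $f$ — which cell to follow over which part of $D$ — is constrained by o-minimal finiteness; concretely I would split along the dichotomy for $\cal R$ (linear versus defining a real closed field on an interval): in the linear case each $f\res V_t^\circ$ is affine and $C^1$-smoothness forces the linear parts to match across cells, so $f$ is affine on each component of $D$; in the field case one uses that $f$, being $C^\infty$, is locally determined by its Taylor coefficients, which vary $\cal R$-definably over the family $\{X_t:t\in A\}$, to produce the required $\cal R$-definable $g$. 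This is the heart of the matter: it is exactly here that the a priori ``$0$-dimensional amount of gluing'' has to be converted into genuine $\cal R$-definability, and exactly here that the smoothness hypothesis, rather than mere continuity, is indispensable.
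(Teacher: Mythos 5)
There is a genuine gap, and you have put your finger on it yourself. The paper does not prove Theorem \ref{app1} by a self-contained argument: it observes that Theorem \ref{main2} supplies exactly the decomposition hypothesis under which the main results of \cite{ES} and \cite{ES2} apply, and those papers are precisely about the step you label ``the heart of the matter'' --- upgrading ``$f$ agrees, on a dense open subset of its domain, with members of a single $\cal R$-definable family of cell functions, indexed by a small set'' to ``$f$ is definable in $\cal R$,'' using smoothness. Your first two paragraphs (feeding the graph into Theorem \ref{main2}, isolating the $n$-dimensional pieces, and getting a dense open $D'\subseteq D$ on which $f$ locally coincides with $\cal R$-definable cell functions) are fine and are the natural reduction, but from ``Granting this\ldots'' onward the proof stops: the decisive implication is assumed, not proved, and what you sketch in its place is not adequate. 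In particular, your dichotomy ``linear versus defining a real closed field on an interval'' is not exhaustive for o-minimal expansions of the real additive ordered group: there are semibounded non-linear structures (this is exactly why the paper has to treat the semibounded case separately in the proof of Theorem \ref{L-DEM}, and why \cite{ES2} exists as a separate paper), and for such $\cal R$ the cell functions over $V_t^\circ$ need not be affine, so ``$C^1$ forces the linear parts to match'' does not apply. In the field case, ``the Taylor coefficients vary $\cal R$-definably over the family'' does not by itself yield an $\cal R$-definable $g$: the gluing data is a selection function defined on a small, in general highly non-$\cal R$-definable, index set, and converting infinitely many derivative-matching conditions into a definable choice of branch is the substantial analytic/definability work carried out in \cite{ES}.

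So the proposal should be assessed as incomplete: it reproduces the easy reduction (which the paper gets for free from Theorem \ref{main2}) and leaves unproved the step that the paper outsources to \cite{ES} and \cite{ES2}; completing your outline honestly would amount to reproving the main theorems of those two papers, including the semibounded case your case split omits.
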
 

\smallskip 
\par {\em Structure of the paper:} In the first part, we establish some notations, definitions and basic results. In the second part we prove Theorem \ref{main} and \ref{main2}. In the third part, we discuss some applications (such as Theorem \ref{app1}) and some related questions. 
\smallskip 
\par {\em Aknowledgement: } We would like to thank  Pantelis Eleftheriou, Chris Miller and Philipp Hieronymi for usefull discussions. We would like to thank particularily Erik Walsberg  for usefull discussion in general and for pointing some possible applications. 
\section{Preliminaries}  By a cell, we mean an o-minimal cell definable in $\cal R$.  If $S\sub \R^n$ is a set, its closure is denoted by $\cl S$, with sole exception  $\overline \R$, which denotes the real field. We denote by $\intr (S)$ the interior of $S$ and by $\bd(S)$ its boundary, that is $S\sm \intr(S)$. By an open box $B\subseteq \R^n$, we mean a set of the form
$$B=(a_1, b_1)\times \ldots \times (a_n, b_n),$$
for some $a_i< b_i\in \R\cup\{\pm\infty\}$. By an open set we always mean a non-empty open set. $for a,b\in \R^n$ we denote by $[a,b]$ the set $\{xa+(1-x)b:\; x\in [0,1]\}$. For two sets $A,B\subseteq \R^n$ we denote by $[A,B] $ the set $\bigcup_{a\in A,\, b\in B}[a,b]$. For $x\in \R^n$ and $\delta\in \R$, we denote by $\cal B(x,\delta)$ the box of center $x$ and of radius $\delta$. We denote by $\cal B(\delta)=\cal B(0,\delta)$. 
 By `definable', we mean definable in $\WR$ with parameters. \\

For any set $X\sub R^n$, we define its \emph{dimension} as the maximum $k$ such that some projection of $X$ to $k$ coordinates has non-empty interior. We call a set {\em small} if it has dimension $0$ and we call a family small if it is indexed by a small set.
\par We assume to be known the basics of o-minimality as they can be found in \cite{VDD}.
We also assume to be known the basics of d-minimality as they can be found in \cite{Mil1}.

\subsection{sparseness}
In this subsection  we recall some central result about sparse structures and we establish some property of the dimension in our setting.

\par The following theorem from Friedman and Miller is a central result for the main results of this paper.  
\begin{fact}\label{FM}(see \cite{FM2} last claim of the proof of Theorem A)  
Let $\la \cal R,P'\ra $ be sparse. Let $A\subseteq \R^{n+1}$ be definable in $\la \cal R,P'\ra^\#$ such that for every $x\in \R^n$, $A_x$ has no interior. Then there is a function $f:\R^{m+n}\rightarrow \R$, definable in \Cal R, such that for every $x\in \R^n$, $$A_x\subseteq \cl{f(P'^m\times\{x\})}.$$
\end{fact}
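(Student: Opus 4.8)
The plan is to derive this from the structure theory of sparse expansions developed in \cite{FM2} --- indeed this is exactly how it appears there, as the last claim in the proof of Theorem~A. The starting point would be the cell-decomposition statement underlying that theorem: for any set definable in $\la\cal R,P'\ra^\#$ and any coordinate projection there is a finite decomposition into $P'$-parametrised families of $\cal R$-cells compatible with that projection. Applying this to $A$ and the projection $\pi\colon\R^{n+1}\to\R^n$ onto the first $n$ coordinates, I would obtain finitely many $\cal R$-definable families of cells $\{C^i_{\bar t}:\bar t\in\R^{m_i}\}$ $(i=1,\dots,N)$, each (refining if necessary) of constant combinatorial type over $\pi$, with
$$A=\bigcup_{i=1}^{N}\ \bigcup_{\bar t\in P'^{m_i}}C^i_{\bar t},$$
where each $C^i_{\bar t}$ is, over its base $D^i_{\bar t}=\pi(C^i_{\bar t})$, either the graph $\{(x,g_i(\bar t,x)):x\in D^i_{\bar t}\}$ of an $\cal R$-definable function $g_i$, or a band $\{(x,y):x\in D^i_{\bar t},\ \gamma^i_1(\bar t,x)<y<\gamma^i_2(\bar t,x)\}$ with $\gamma^i_1<\gamma^i_2$ (possibly infinite), $\gamma^i_1,\gamma^i_2$ being $\cal R$-definable.

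The next step is to use the hypothesis to kill the band families. If the $i$-th family is of band type then $D^i_{\bar t}=\es$ for every $\bar t\in P'^{m_i}$ --- otherwise, choosing $\bar t\in P'^{m_i}$ with $D^i_{\bar t}\ne\es$ and $x\in D^i_{\bar t}$, the fibre $(C^i_{\bar t})_x$ is a nonempty open interval contained in $A_x$, contradicting that $A_x$ has no interior --- so this family contributes $\es$ and can be discarded. Hence I may assume every $C^i_{\bar t}$ is a graph, and then for every $x\in\R^n$
$$A_x\ \subseteq\ \bigcup_{i=1}^{N}\{\,g_i(\bar t,x):\bar t\in P'^{m_i}\,\}\ =\ \bigcup_{i=1}^{N}g_i\bigl(P'^{m_i}\times\{x\}\bigr).$$

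What remains is to amalgamate $g_1,\dots,g_N$ into a single $\cal R$-definable $f$. If $P'$ is finite the statement is degenerate: $\la\cal R,P'\ra^\#$ is then a reduct of $\cal R$ with finitely many constants, $A$ is $\cal R$-definable, each $A_x$ is finite of size bounded independently of $x$, and $\cal R$-cell decomposition exhibits $A$ as a bounded union of graphs of $\cal R$-definable functions, from which a suitable $f$ is read off. If $P'$ is infinite I would pick distinct $c_1,\dots,c_N\in P'$, set $m=1+\max_i m_i$, and put $f(t_0,t_1,\dots,t_{m-1},x)=g_i\bigl((t_1,\dots,t_{m_i}),x\bigr)$ when $t_0=c_i$ (and $0$ otherwise); this $f$ is $\cal R$-definable, and $g_i\bigl(P'^{m_i}\times\{x\}\bigr)=f\bigl(\{c_i\}\times P'^{m-1}\times\{x\}\bigr)\subseteq f\bigl(P'^{m}\times\{x\}\bigr)$, so $A_x\subseteq f\bigl(P'^{m}\times\{x\}\bigr)\subseteq\cl{f(P'^{m}\times\{x\})}$.

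The one genuine obstacle is the structure theorem for sparse expansions that supplies the $P'$-parametrised cell decomposition compatible with a prescribed projection; everything after that is essentially bookkeeping --- the no-interior hypothesis does the real work by forcing every band family to vanish, and uniformising the lengths of the $P'$-tuples and merging finitely many $\cal R$-definable functions is routine o-minimality. The closure in the conclusion is more than this argument produces on the nose, but it harmlessly absorbs the customary ``up to a set on the boundary'' slack of such decompositions, should the structure theorem be invoked in that form.
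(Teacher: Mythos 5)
The paper offers no argument for this statement at all: it is quoted verbatim as the last claim in the proof of Theorem~A of Friedman--Miller \cite{FM2}, so there is no internal proof for your attempt to be measured against; what you have written is an attempted reconstruction of (part of) the Friedman--Miller argument.

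As such, it has a genuine gap. Your entire proof is carried by the opening step, the ``cell-decomposition statement underlying that theorem'': that every set $A$ definable in $\la\cal R,P'\ra^\#$ can be written \emph{exactly} as a finite union $\bigcup_i\bigcup_{\bar t\in P'^{m_i}}C^i_{\bar t}$ of fibres of finitely many $\cal R$-definable cell families, compatible with a prescribed projection. No such decomposition theorem is available as an input here: it is not what \cite{FM2} prove (their Theorem~A is the transfer of sparseness to i-minimality, obtained by an induction on formulas in which the quoted claim is itself a step), and it is essentially a uniform, parametrised version of the main theorem of the present paper (Theorem \ref{main}, proved via Theorem \ref{L-DEM}) --- which is proved \emph{using} Fact \ref{FM}. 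So, relative to this paper, your argument is circular: all the difficulty has been pushed into the unproved decomposition, and the steps you do carry out (discarding band-type cells via the no-interior hypothesis, merging $g_1,\dots,g_N$ into one $f$) are exactly the routine part. Note also that your band-killing needs the inclusion $\bigcup_{i,\bar t}C^i_{\bar t}\subseteq A$ while the covering of $A_x$ needs the reverse inclusion, so a decomposition that is only correct ``up to closure'' or up to lower-dimensional slack would not feed into your argument as written; and the closure in the conclusion of Fact \ref{FM} is not cosmetic slack but reflects that in the actual Friedman--Miller argument the covering family arises through limit points (this is precisely why the paper later works with sets of the form $\cl{\bigcup_{t\in P^k}Y_t}$ and Hausdorff limits). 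A minor further point: your degenerate case is not quite right either --- if $P'$ has fewer elements than the number $N$ of graph families (e.g.\ $P'$ a singleton and $A_x$ a two-point fibre), no single $\cal R$-definable $f$ with the stated property can be ``read off'', so that case needs either an explicit exclusion or a different formulation.
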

\begin{lemma}\label{DIM}
Let $\{X_t:\, t\in S\}$ be a definable small subfamily of a family definable in \cal R. Then $$\dim(X=\bigcup_{t\in S}X_t)=\max_{t\in S}(\dim(X_t)).$$ 
\end{lemma}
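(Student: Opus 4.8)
The inequality $\dim X\ge\max_{t\in S}\dim X_t$ (with $X:=\bigcup_{t\in S}X_t$) is immediate, since $X_t\subseteq X$ for $t\in S$. For the reverse inequality the plan is to reduce, via a uniform o-minimal cell decomposition of the ambient $\cal R$-family $\{X_t:t\in A\}$, to the case where all $X_t$ are cells of one fixed type, hence of one fixed dimension $d$. Writing $X_t=\bigsqcup_{j\le N}C^j_t$ with each $\{C^j_t:t\in A\}$ an $\cal R$-definable family of cells of a fixed type, we have $X=\bigsqcup_{j}\bigcup_{t\in S}C^j_t$; and since $\dim$ is finitely additive on $\WR$-definable sets — if each of finitely many $\WR$-definable sets has empty interior after a given coordinate projection, then each is nowhere dense by i-minimality, and a finite union of nowhere dense sets is nowhere dense — it suffices to bound each $\dim\bigcup_{t\in S}C^j_t$. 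So I would prove, by induction on $d$, the assertion $(\star_d)$: \emph{if $\{C_t:t\in A\}$ is an $\cal R$-definable family of $d$-dimensional cells and $S\subseteq A$ is small, then $\dim\bigcup_{t\in S}C_t=d$.}

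For the inductive step, assume $\dim\bigcup_{t\in S}C_t\ge d+1$ and fix a coordinate projection $\pi$ onto $\R^{d+1}$ under which the union has nonempty interior. Each $\pi(C_t)$ has dimension $\le d$, and $\{\pi(C_t):t\in A\}$ is again $\cal R$-definable; cell-decomposing it uniformly and using finite additivity again, some $\cal R$-definable family $\{D_t:t\in A\}$ of cells of a fixed dimension $d'\le d$ satisfies $\dim\bigcup_{t\in S}D_t\ge d+1$. If $d'<d$, this contradicts $(\star_{d'})$. If $d'=d$, then each $D_t$ is a $d$-dimensional cell in $\R^{d+1}$, hence — after one coordinate permutation, uniform in $t$ — the graph $\Gamma(g_t)$ of a continuous $\cal R$-definable function $g_t$ over an open set $U_t\subseteq\R^d$, so $\bigcup_{t\in S}\Gamma(g_t)$ contains a box $B'\times(a,b)\subseteq\R^d\times\R$. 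Fixing any $y_0\in B'$, the section $\{r:(y_0,r)\in\bigcup_{t\in S}\Gamma(g_t)\}=\{g_t(y_0):t\in S,\ y_0\in U_t\}$ is the image of the small set $\{t\in S:y_0\in U_t\}$ under the $\cal R$-definable map $t\mapsto g_t(y_0)$, so it cannot contain the interval $(a,b)$ — contradiction — provided one knows the

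\smallskip
\noindent\textbf{Key Lemma.} \emph{The image of a $\WR$-definable set of dimension $0$ under an $\cal R$-definable map has dimension $0$.}
\smallskip

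\noindent The base case $(\star_0)$ is itself an instance of this, since a family of $0$-cells is the graph of an $\cal R$-definable map $c$ and $\bigcup_{t\in S}C_t=c(S)$. To prove the Key Lemma I would first reduce, by induction on the dimension of the target, to maps into $\R$ (a projection of the image to fewer coordinates is again an $\cal R$-definable image of the dimension-$0$ set, and if the full image had interior in $\R^p$ its first-coordinate projection would have interior in $\R$). For $g$ into $\R$ I would induct on the dimension $k$ of the source via o-minimal cell decomposition with parametrized monotonicity in the last variable: on a cell where $g$ is constant in $x_k$ it factors through the projection to $\R^{k-1}$, so the inductive hypothesis applies; on a cell where $g$ is strictly monotone in $x_k$, the map $(x',x_k)\mapsto(x',g(x',x_k))$ is a homeomorphism of the cell and $g$ is that map followed by a coordinate projection, so it remains to see that such a homeomorphism does not raise dimension — fiberwise over $x'$ this is the elementary fact that an $\R^1$ homeomorphism preserves "having empty interior", and one passes from fibers to the whole set by a Baire-category argument over rational boxes; the finitely many cell contributions are recombined using i-minimality as before.

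The real obstacle is exactly the Key Lemma, i.e. the good behaviour of topological dimension under $\cal R$-definable maps (and under the "triangular" homeomorphisms produced by monotonicity); everything else is cell-decomposition bookkeeping. A likely cleaner, and probably the intended, route to the dimension-$0$ statements goes through Fact~\ref{FM}: a dimension-$0$ $\WR$-definable set $S$ lies in $\cl{f(P^m)}$ for a suitable $\cal R$-definable $f$ (apply Fact~\ref{FM} coordinatewise), and more generally the relevant unions are dominated by sets of the form $\bigcup_{t\in S}\cl{f(P^m\times\{t\})}$; the remaining point is then that such sets are nowhere dense, using sparseness together with the fact that i-minimality forces $P$, hence $\cl P$, to be nowhere dense.
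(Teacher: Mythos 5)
Your reduction steps (the trivial inequality, uniform cell decomposition into finitely many $\cal R$-definable families of cells of fixed type, and recombining finitely many pieces via i-minimality) are fine and parallel the paper's first step, but there is a genuine gap exactly where you place the ``Key Lemma''. First, that lemma is not a simplification of the statement: $g(S)=\bigcup_{t\in S}\{g(t)\}$ is itself a small subfamily of the $\cal R$-definable family of singletons $\{\{g(t)\}:t\in\R^k\}$, so you have reformulated Lemma \ref{DIM} in an essentially equivalent special case that still carries all of its content. Second, and more seriously, your sketched proof of the Key Lemma (induction on the source dimension, parametrized monotonicity, ``a triangular homeomorphism does not raise dimension'', Baire category over rational boxes) never invokes sparseness, yet without sparseness the statement is false: for the middle-thirds Cantor set $C$ one has $C+C=[0,2]$, so the $\cal R$-definable map $(x,y)\mapsto x+y$, which is strictly monotone in the last variable, sends the dimension-$0$ set $C\times C$ onto a set with interior. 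In that example every section of the image of the triangular map over a fixed $x'$ is a translate of $C$, hence has empty interior, exactly as your fiberwise argument predicts; nevertheless the projection of the image onto the last coordinate, which is the union of these sections over uncountably many $x'$, is $[0,2]$. Baire category controls countable unions of nowhere dense sets, not uncountable ones, so the passage ``from fibers to the whole set'' cannot be elementary: this is precisely the point where the hypothesis on $P$ must enter, and your argument for it cannot be repaired as stated.

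The paper's proof takes the route you only mention at the end, and it is worth noting how it avoids the trap that your fallback sketch still contains. After one uniform cell decomposition (so that each $X_t$ is the graph of a continuous function over an open set, all over the same projection $\pi$), the paper applies Fact \ref{FM} once, to the whole union $X$ fibered over $\pi$: this yields a single $\cal R$-definable $f$ with $\pi^{-1}(z)\cap X\subseteq\overline{f(z,P^k)}$ for \emph{every} $z$, and each such set has empty interior by sparseness, hence is nowhere dense (dimension $0$) by i-minimality; the fiber condition needed to invoke Fact \ref{FM} is the only thing left to arrange. The crucial feature is that every fiber is covered by \emph{one} closure of an image of $P^k$, uniformly in the base point $z$. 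Your proposal to dominate things by $\bigcup_{t\in S}\overline{f(P^m\times\{t\})}$ reintroduces a union indexed by the small set $S$, i.e. exactly the kind of set whose dimension the lemma is supposed to control, so it is circular as phrased. If you keep your inductive skeleton, the fix is to replace the Key Lemma step by an application of Fact \ref{FM} to the $\WR$-definable set $\bigcup_{t\in S}\Gamma(g_t)$ fibered over $\R^d$, so that sparseness and i-minimality, rather than a topological argument, rule out the box $B'\times(a,b)$.
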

\begin{proof}
First of all, if $\dim(X_t)$ is equal to the dimension of the ambient space for some $t\in S$, the result is obvious. Thus, we may assume that it is not the case. 
Since $\{X_t:\; t\in S\}$ is a subfamily of a family definable in \Cal R, by uniform cell-decomposition in o-minimal structures, we may assume that there is some projection $\pi$, such that for every $t\in S$, $\pi(X_t)$ is open, connected and $X_t$ is the graph of a continuous function over $\pi(X_t)$.  By sparseness there is a function $f$, definable in \Cal R, such that  for  every $z\in \pi(X)$, $\pi^{-1}(z)\cap X\subseteq \cl{f(z,P^k)}$ and by i-minimality, this last set has dimension $0$. Thus, we have the result. 
\end{proof}
\begin{fact}\label{pi good} Let $X\subseteq \R^m$ be a definable set of dimension $n$. Then, for any projection $\pi$ onto some $n$-coordinates, $$\dim\big(\{x\in \pi(X):\; \dim(\pi^{-1}(x)\cap X)>0 \}\big)<n.$$
\end{fact}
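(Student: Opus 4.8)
The plan is to reduce to the case $m=n+1$ by an inductive argument on the number of extra coordinates, so assume first that $X\subseteq\R^{n+1}$ and $\pi$ is a projection onto $n$ coordinates; write points as $(x,y)$ with $x\in\R^n$. Let $Y=\{x\in\pi(X):\dim(\pi^{-1}(x)\cap X)>0\}$. Since $\pi^{-1}(x)\cap X$ is a subset of $\R$, having positive dimension means having nonempty interior in $\R$; so $Y=\{x:(\pi^{-1}(x)\cap X)^{\circ}\neq\es\}$. The claim is $\dim(Y)<n$, i.e. $Y$ has empty interior in $\R^n$. Suppose not: then $Y$ contains an open box $B$. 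For each $x\in B$ the fiber $X_x$ contains an open interval; using definable choice pick, for each $x\in B$, an interval $(a(x),b(x))\subseteq X_x$ with $a,b$ definable on $B$. Shrinking $B$ we may assume $a,b$ are continuous and $a<b$ on $B$, so $\{(x,y):x\in B,\ a(x)<y<b(x)\}$ is an open subset of $X$, contradicting $\dim(X)=n$.

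For the general case I would induct on $m-n$. Factor $\pi=\pi_n\circ\pi_{m-1}$ where $\pi_{m-1}:\R^m\to\R^{m-1}$ drops one coordinate and $\pi_n:\R^{m-1}\to\R^n$ drops the rest. Set $X'=\pi_{m-1}(X)$; then $\dim(X')\le n$. By the base case applied to $X$ and $\pi_{m-1}$, the set $Y_1=\{x'\in X':\dim(\pi_{m-1}^{-1}(x')\cap X)>0\}$ has dimension $<\dim(X')\le n$. Now for $x\in\pi(X)$ with $\dim(\pi^{-1}(x)\cap X)>0$: either $\dim(\pi_n^{-1}(x)\cap X')>0$, or $\pi_n^{-1}(x)\cap X'$ is finite and some fiber over a point of it is infinite, i.e. $\pi_n^{-1}(x)\cap X'$ meets $Y_1$. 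This exhibits $Y\subseteq \{x:\dim(\pi_n^{-1}(x)\cap X')>0\}\cup \pi_n(Y_1)$. If $\dim(X')<n$ we are already done since then $\dim(\pi(X'))\le\dim(X')<n$ and $Y\subseteq\pi(X')$; if $\dim(X')=n$, the first set has dimension $<n$ by the inductive hypothesis applied to $X'$ and $\pi_n$, and $\dim(\pi_n(Y_1))\le\dim(Y_1)<n$, so $\dim(Y)<n$.

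The one genuine subtlety is that all of this must be carried out in $\WR$, not in the o-minimal reduct, so I cannot invoke o-minimal cell decomposition to split fibers into finitely many points and intervals. In the base case this is handled purely by definable choice together with the characterization of positive dimension as having interior, and in the inductive step by the trichotomy above together with Lemma~\ref{DIM}-style additivity of dimension over the pieces; the main point to check carefully is that "$\dim(X)=n$" combined with "an open box of $X$" genuinely contradict each other, which is immediate from the definition of dimension as the largest $k$ with a projection having interior. So I expect the inductive bookkeeping — keeping track of whether $\dim(X')$ drops or stays at $n$ — to be the only place requiring care, and it is routine.
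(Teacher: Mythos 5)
The paper states Fact~\ref{pi good} without proof or citation, so there is no argument of the author's to compare yours against; judged on its own, your proposal has genuine gaps. The first is the step ``shrinking $B$ we may assume $a,b$ are continuous'', together with your closing claim that the base case needs only definable choice. That cannot be right: the statement fails in expansions with definable choice that are not i-minimal. In the expansion of $(\R,<,+)$ by predicates for all subsets of all $\R^k$ (which has definable choice, since every choice function is definable), take $f:\R\to\R$ with dense graph and $X=\{(x,y):\,f(x)<y<f(x)+1\}$: then $X$ has empty interior, so $\dim X=1$, yet every fiber of the projection onto the first coordinate has interior, so the exceptional set is all of $\R$. Hence i-minimality must be used, and the only place it could enter your base case is exactly the continuity-after-shrinking step; generic continuity of $\WR$-definable functions is itself a nontrivial tameness statement which you neither prove nor cite. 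The step can be repaired, and choice, continuity and the induction on $m-n$ avoided altogether: if the exceptional set contained a box $B$, write $B=\bigcup_{j,\,p<q\in\Q}B_{j,p,q}$, where $B_{j,p,q}$ is the set of $x\in B$ such that $(p,q)$ is contained in the projection of $\pi^{-1}(x)\cap X$ onto the $j$-th omitted coordinate; each $B_{j,p,q}$ is definable, so by Baire category some $B_{j,p,q}$ is not nowhere dense, hence by i-minimality contains a box $B''$, and then $B''\times(p,q)$ lies inside a projection of $X$ onto $n+1$ coordinates, contradicting $\dim X=n$.

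The second gap is in the inductive step. Your base case assumes $\dim X=m-1$, but in the inductive step $\dim X=n\le m-2$, so ``the base case applied to $X$ and $\pi_{m-1}$'' is not applicable; your box argument would only give $\dim Y_1<m-1$, not $\dim Y_1<n$, and the asserted bound $\dim Y_1<\dim(X')$ is false in general (take $X=P\times(0,1)\times\{0\}$, target coordinate $1$, drop coordinate $2$: then $Y_1=X'$ and both have dimension $0$). In the branch you actually need, $\dim(X')=n$, nothing is established: what is required there is the codimension-one statement for sets of arbitrary dimension, $\dim\{x':\dim(\pi_{m-1}^{-1}(x')\cap X)>0\}\le\dim X-1$, which is essentially the Fact itself and is proved by the partition argument above, not by your base case. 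Finally, the inclusion $Y\subseteq\{x:\dim(\pi_n^{-1}(x)\cap X')>0\}\cup\pi_n(Y_1)$ is not bookkeeping: its contrapositive asserts that a union of dimension-$0$ fibers over a dimension-$0$ base has dimension $0$, which is false for arbitrary sets (the graph of a bijection from a Cantor set onto an interval) and in this setting needs sparseness via Fact~\ref{FM}; Lemma~\ref{DIM} does not apply directly because the family of fibers is only $\WR$-definable, and ``finite''/``infinite'' should in any case read ``dimension $0$''/``positive dimension''.
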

\subsection{The Hausdorff topology}
\begin{defn}
We denote by $\cal K(\R^n)$ the collection of compact subsets of $\R^n$. 
\par The {\em Hausdorff distance} on $\cal K(\R^n)$ is defined by $$d_H(X,Y)=\sup\{d(x,Y),d(X,y):\; x\in X,y\in Y\}.$$
It is easy to see that $d_H$ is a metric and it gives us a topology on $\cal K( \R^n)$, the {\em Hausdorff topology}. 
 Let $(X_n)$ be a sequence of compact sets which is converging in the Hausdorff topology. We denote by $\lim_H(X_n)$ its limit. 
 For a family of compact sets $\cal C$, we denote by $\cl{\cal C}^H$ the closure of $\cal C$ in the Hausdorff topology.
\end{defn}
The following theorem is the technical key to prove $\cal R$-DEM. 
\begin{theorem}\label{Hausdorff fam} (see \cite{VDD Hausdorff} for an overview of the results on this topic)
Let $\cal C$ be a family of compact sets, definable in \Cal R and contained in $\R^n$. Then $$\{Y\in \cal K(\R^n):\; Y\in \cl{\cal C}^H\}$$ is definable in \Cal R.  
\end{theorem}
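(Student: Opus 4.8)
The plan is to induct on $d:=\dim A$, where $\cal C=\{C_a:\ a\in A\}$ with $A\sub\R^m$ and the graph $\Gamma:=\{(a,x):\ a\in A,\ x\in C_a\}$ definable in $\cal R$; everything is done uniformly in auxiliary parameters, so that the conclusion is a uniformly definable family of compact sets.

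We first make two reductions. If $Y\in\cl{\cal C}^H$ then $Y$ is compact and, for $i$ large, $C_{a_i}$ lies in a fixed bounded neighbourhood of $Y$; hence
$$\cl{\cal C}^H=\bigcup_{R>0}\cl{\{C_a:\ a\in A,\ C_a\sub[-R,R]^n\}}^H,$$
the inner family being definable in $\cal R$ uniformly in $R$. As these are increasing in $R$, it suffices to prove, uniformly, that the Hausdorff closure of a family all of whose members lie in one fixed compact box $K$ is definable in $\cal R$; so from now on $C_a\sub K$ for all $a$, and, composing with a definable homeomorphism $\R^m\cong(-1,1)^m$, $A$ is bounded. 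For $d=0$ the set $A$ is finite, so $\cal C$ is a finite --- hence Hausdorff-closed --- subset of $\cal K(K)$ and $\cl{\cal C}^H=\cal C$ is definable.

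Now let $d>0$ and assume the theorem for definable families whose base has dimension $<d$. By cell decomposition of $\Gamma$ over $A$ in $\cal R$ we partition $A$ into finitely many cells over each of which $a\mapsto C_a$ is Hausdorff-continuous (on a cell over which $\Gamma$ is definably trivial the compact fibre varies continuously in the Hausdorff metric); since the closure of a finite union is the union of the closures, $\cl{\cal C}^H$ is the union of the finitely many corresponding Hausdorff closures, and we may assume $A$ is a single cell with $a\mapsto C_a$ Hausdorff-continuous on $A$. A short o-minimal lemma --- cell-decompose the one-parameter family $\{C_{\gamma(s)}\}$, whose combinatorial cell structure inside $K$ stabilises as $s\to0^+$ while the finitely many defining functions have limits by the Monotonicity Theorem --- shows that for every definable curve $\gamma:(0,1)\to A$ the Hausdorff limit $\Lambda(\gamma):=\lim_{s\to0^+}C_{\gamma(s)}$ exists. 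Conversely every $Y\in\cl{\cal C}^H$ arises this way: writing $Y=\lim_iC_{a_i}$ with (after passing to a subsequence) $a_i\to b\in\overline A$, one checks using finite $\varepsilon$-nets of the compact set $Y$ that ``$d_H(C_a,Y)$ small'' is, up to changing $\varepsilon$ by a bounded factor, a condition on $a$ that is definable in $\cal R$ over real parameters; so $(b,0)$ lies in the closure of a definable set in which curve selection gives a definable $s\mapsto(\gamma(s),\varepsilon(s))$ with $\varepsilon(s)\to0$, forcing $\Lambda(\gamma)=Y$. Hence $\cl{\cal C}^H=\{\Lambda(\gamma):\ \gamma\text{ a definable curve }(0,1)\to A\}$. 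If $b:=\lim_{s\to0^+}\gamma(s)$ lies in $A$ then $\Lambda(\gamma)=C_b\in\cal C$ by continuity on the cell; writing $L$ for the set of $\Lambda(\gamma)$ with $b\in\partial A:=\overline A\sm A$, we therefore have $\cl{\cal C}^H=\cal C\cup L$, where $\dim\partial A<d$ by the o-minimal frontier inequality.

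It remains to realise $L$ as $\cl{\cal C'}^H$ for a family $\cal C'$ definable in $\cal R$ with base of dimension $<d$ and $\cal C'\sub\cl{\cal C}^H$; then $\cl{\cal C}^H=\cal C\cup\cl{\cal C'}^H$ is definable in $\cal R$ by the induction hypothesis (indeed $\cal C'\sub\cl{\cal C}^H$ gives $\cl{\cal C'}^H\sub\cl{\cal C}^H$, while $L=\cal C'\sub\cl{\cal C'}^H$). Producing such a $\cal C'$ --- i.e.\ packaging into one low-dimensional definable family the Hausdorff limits taken along definable curves running into $\partial A$ --- is the main obstacle. The difficulty is that near $\partial A$ the assignment $a\mapsto C_a$ need not be uniformly continuous: the fibre can vary at a higher-order rate than the approach, so $\Lambda(\gamma)$ depends on more than the first-order germ of $\gamma$, and neither one cell decomposition nor one trivialisation of $\overline A$ along $\partial A$ captures all the limits. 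The intended remedy is a descent: after a graph-cell projection reduce to $A\sub\R^d$ open with $\partial A$ a cell, work uniformly over $b\in\partial A$, and iterate finitely often a decompose-and-normalise step (at each stage rescaling the fibres against the current approach parameter and passing to a frontier of strictly smaller dimension) until $a\mapsto C_a$ extends to a definable, Hausdorff-continuous map on an auxiliary definable space $\widehat A\supseteq A$ with $\dim(\widehat A\sm A)<d$; then $L$ is the image of $\widehat A\sm A$, a definable family over a base of dimension $<d$ whose members are manifestly Hausdorff limits of members of $\cal C$. o-minimal additivity of dimension is exactly what keeps the directional parameters created by the successive normalisations from raising the index dimension back to $d$. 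Showing this descent terminates and yields a genuinely definable extension is the technical heart of the argument; granted it, the induction closes.
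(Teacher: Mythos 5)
This statement is quoted in the paper as a known result (with a pointer to van den Dries's survey on Hausdorff limits of definable families); the paper gives no proof, so your sketch can only be measured against the known arguments, which are model-theoretic: pass to an elementary extension, realise each Hausdorff limit as the standard part of a fibre $C_{a^*}$ at a nonstandard parameter, and use Marker--Steinhorn definability of types over $\R$ to see that these standard parts form a family definable in $\cal R$. Your proposed direct induction on $\dim A$ is not such a proof, and you concede the decisive point yourself: the entire content of the theorem sits in the step where the limits along curves running into $\partial A$ are packaged into a definable family $\cal C'$ over a base of dimension $<d$. The ``decompose-and-normalise, rescale against the current approach parameter'' descent is only gestured at: no precise statement is formulated, no quantity is exhibited that strictly decreases, and this is exactly where naive inductions break down, because near $\partial A$ the fibres can vary at arbitrary order relative to the distance to the boundary, so no single trivialisation or finite rescaling scheme obviously captures all limits. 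Granting that step as a black box is granting the theorem.

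There is also a local gap earlier, in the claim that every $Y\in\overline{\cal C}^{\,H}$ is a limit along a definable curve. The $\varepsilon$-net trick does give, for each \emph{fixed} $\varepsilon$ and a chosen net of $Y$, a set definable in $\cal R$ squeezed between $\{a: d_H(C_a,Y)<\varepsilon\}$ and $\{a: d_H(C_a,Y)<3\varepsilon\}$; but the nets, and hence the defining parameters, change with $\varepsilon$, so these sets do not assemble into a definable subset of $A\times(0,1)$, and the set $\{(a,\varepsilon): d_H(C_a,Y)<\varepsilon\}$ itself is not definable since $Y$ is arbitrary. Consequently the sentence ``curve selection gives a definable $s\mapsto(\gamma(s),\varepsilon(s))$ with $\varepsilon(s)\to 0$'' has no definable set behind it as written. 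The curve statement is true, but in the literature it is obtained either after the definability of the limit family or by the nonstandard/standard-part route above; proving it, and carrying out your descent, are real tasks that must be done before your induction closes.
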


\begin{fact} \label{Cynk}(see \cite[Theorem 3]{cynk})
Let $Y_{j}$ be a sequence in $ \cal K(\R^n)$ and $A\in \cal K(\R^n)$ such that $\bigcup_jY_j$ is contained in a compact set. Then $(Y_j) $ converge to $A$ in the Hausdorff topology if and only if the following conditions are satisfied:
\par 1) Let $y_i\in Y_i $. If $(y_i)$ converge to $y$ then $y\in A$.
\par 2) If $y\in A$ then there is a sequence $(y_i)_i$ with $y_i\in Y_i$ such that $\lim_iy_i=y$.  
\end{fact}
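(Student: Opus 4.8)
The plan is to prove the two directions of the equivalence separately; both are routine sequential-compactness arguments whose only real input is that, for a fixed nonempty $S\subseteq\R^n$, the map $x\mapsto d(x,S)$ is $1$-Lipschitz, together with the standing hypothesis that $A$ and all the $Y_j$ lie inside one compact set $K$ (take $K$ to be the union of $A$ with a compact set containing $\bigcup_jY_j$). I would first clear away the trivialities: the case $A=\es$ follows directly from the definition of $d_H$, and when $A\ne\es$, Hausdorff convergence forces $Y_j\ne\es$ for all large $j$, so after discarding finitely many terms I may assume every $Y_j$ is nonempty.

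For the forward direction, suppose $d_H(Y_j,A)\to 0$. For condition (1), if $y_i\in Y_i$ (possibly along a subsequence) and $y_i\to y$, then $d(y_i,A)\le d_H(Y_i,A)\to 0$, while $|d(y,A)-d(y_i,A)|\le|y-y_i|\to 0$; hence $d(y,A)=0$ and $y\in A$ since $A$ is closed. For condition (2), if $y\in A$ then $d(y,Y_i)\le d_H(A,Y_i)\to 0$, and compactness of $Y_i$ lets me choose $y_i\in Y_i$ with $d(y,y_i)=d(y,Y_i)$, so that $y_i\to y$.

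For the converse, assume (1) and (2) but suppose toward a contradiction that $d_H(Y_j,A)\not\to 0$; passing to a subsequence I may assume $d_H(Y_j,A)\ge\ve$ for some fixed $\ve>0$ and all $j$. Unwinding the definition of $d_H$ (the relevant suprema are attained, by compactness), for each $j$ at least one of the following holds: there is $y_j\in Y_j$ with $d(y_j,A)\ge\ve$, or there is $a_j\in A$ with $d(a_j,Y_j)\ge\ve$; passing to a further subsequence I may assume the same alternative holds for every $j$. In the first case the $y_j$ all lie in the compact set $K$, so a subsequence converges to some $y$; condition (1) gives $y\in A$, yet $d(y,A)=\lim d(y_j,A)\ge\ve>0$, a contradiction. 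In the second case a subsequence of the $a_j$ converges to some $a\in A$ (compactness of $A$); condition (2) yields $z_i\in Y_i$ with $z_i\to a$, whence $d(a_j,Y_j)\le d(a_j,z_j)\le d(a_j,a)+d(a,z_j)\to 0$, again contradicting $d(a_j,Y_j)\ge\ve$. Either way we reach a contradiction, so $d_H(Y_j,A)\to 0$.

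I do not expect a serious obstacle here, but the one point I would be careful about is the precise reading of condition (1): in the converse direction it is genuinely applied to a sequence $(y_{j_k})$ with $y_{j_k}\in Y_{j_k}$ indexed by a subsequence, not by all $j$, so I would use (1) in exactly that subsequential sense (which is also how it gets used later in the paper). Modulo this, the whole argument is just the standard interplay between Hausdorff convergence and the Kuratowski upper and lower limits of a uniformly bounded sequence of compact sets.
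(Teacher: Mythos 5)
Your argument is correct, but there is nothing in the paper to compare it with: the paper does not prove this statement at all, it simply quotes it as a Fact from \cite[Theorem 3]{cynk}. So what your write-up buys is a short, self-contained proof making the paper independent of that reference, and the route you take (Lipschitz continuity of $x\mapsto d(x,S)$, attainment of the suprema in $d_H$ by compactness, and a two-case subsequence extraction inside the fixed compact set $K$) is exactly the standard one for identifying Hausdorff convergence of a uniformly bounded sequence of compacta with its Kuratowski upper and lower limits. Both directions check out: the forward direction is immediate from $d(y_i,A)\le d_H(Y_i,A)$ and $d(y,Y_i)\le d_H(A,Y_i)$, and in the converse your split into the two alternatives realizing $d_H(Y_j,A)\ge\ve$, each refuted by compactness plus (1), respectively (2), is sound (in the second case the contradiction is reached along the chosen subsequence $a_{j_k}\to a$, which is all you need).

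Your closing caveat about condition (1) is not just pedantry, and it is worth recording: with the literal ``full sequence'' reading of (1) the equivalence is actually false. For instance take $Y_{2k}=\{0\}$, $Y_{2k+1}=\{0,1\}$ and $A=\{0\}$: any convergent sequence $y_i\in Y_i$ must converge to $0\in A$ (its even-indexed terms are $0$), and (2) holds, yet $d_H(Y_{2k+1},A)=1$ does not tend to $0$. So condition (1) must be read subsequentially (equivalently: every accumulation point of points $y_i\in Y_i$ lies in $A$), which is the reading in the cited theorem and the one your proof uses. The same remark applies to the degenerate cases you set aside: the statement is really about nonempty compact sets, since if some $Y_i=\emptyset$ condition (2) cannot literally be satisfied when $A\neq\emptyset$; discarding finitely many terms, as you do, is the right normalization.
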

\begin{fact}\label{Dini}(Dini's Theorem, see for example \cite[Theorem 7.13]{Rudin})
Let $\{f_n\}$ be a monotically increasing  sequence (that is that for every $n$, $ \dom(f_n)=\dom(f_{n+1})$ and for every $x\in \dom(f_n) $ $f_n(x)<f_{n+1}{(x)}$) of continuous functions over a compact domain which converges pointwise to a continuous function then the convergence is uniform. The result holds, of course, for  a decreasing sequence satisfying the same conditions.
\end{fact}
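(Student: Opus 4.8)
The plan is to reduce to the case of a sequence decreasing pointwise to $0$. Given the monotonically increasing sequence $\{f_n\}$ converging pointwise to the continuous limit $f$ on the compact domain $K=\dom(f_n)$, I would set $g_n=f-f_n$; each $g_n$ is continuous as a difference of continuous functions, $g_n\geq 0$, the sequence $(g_n)_n$ is pointwise decreasing, and $g_n\to 0$ pointwise on $K$. Uniform convergence of $(f_n)$ to $f$ is equivalent to $\sup_{x\in K}g_n(x)\to 0$, so it suffices to prove the latter.

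Then I would fix $\varepsilon>0$ and run the standard compactness argument. For each $x\in K$, pointwise convergence gives an index $N_x$ with $g_{N_x}(x)<\varepsilon$, and continuity of $g_{N_x}$ yields an open neighbourhood $U_x\ni x$ on which $g_{N_x}<\varepsilon$; since $(g_n)_n$ is decreasing, in fact $g_n<\varepsilon$ on $U_x$ for every $n\geq N_x$. The family $\{U_x:\, x\in K\}$ covers $K$, so by compactness finitely many $U_{x_1},\dots,U_{x_k}$ already cover it; put $N=\max_{i\leq k} N_{x_i}$. For $n\geq N$ and any $x\in K$, picking $i$ with $x\in U_{x_i}$ gives $g_n(x)\leq g_{N_{x_i}}(x)<\varepsilon$, whence $\sup_{x\in K}g_n(x)\leq\varepsilon$ for all $n\geq N$. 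The decreasing case follows by applying this to $(-f_n)$, whose negatives form an increasing sequence with continuous limit $-f$.

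I do not expect a genuine obstacle here, since this is the classical argument (compare \cite[Theorem 7.13]{Rudin}). The only points worth stating carefully are that compactness is used in its open-cover form and that both hypotheses --- compactness of the domain and continuity of the limit --- are essential: continuity of $f$ is exactly what makes each $g_n$ continuous and hence supplies the neighbourhoods $U_x$, and without compactness one cannot pass from the local bounds to a single uniform index $N$.
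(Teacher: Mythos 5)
Your argument is correct and is exactly the classical open-cover compactness proof of Dini's theorem; the paper does not prove this statement but simply cites \cite[Theorem 7.13]{Rudin}, where the same argument appears. Nothing to add.
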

\begin{corollary}\label{Hlim=cl}
Let $\{Y_j:\; j\in \N \}$ be a small subfamily of a  family of graphs of continuous functions  $\{f_j:Z\to \R:\; j\in \N\}$ definable in \cal R such that:
\begin{itemize}
\item $Z$ is an open cell.
\item For every $i<j$, for every $x\in Z$, $f_i(x)>f_j(x)$.
\item $\bigcup_jY_j$ is contained in a compact set.
\end{itemize}
Let $A=\{\lim_j(z,f_j(z)):\; z\in Z\}$. We assume that $A$ is  the graph of a continuous function $f:Z\rightarrow \R$.  Then $$\lim_H(\{\cl{Y_j}\})\cap \pi^{-1}(Z)=A.$$
\end{corollary}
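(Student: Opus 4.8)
The plan is to identify the Hausdorff limit fibre‑by‑fibre over $Z$ by means of the sequential criterion of Fact \ref{Cynk}, after upgrading the pointwise convergence $f_j(z)\to f(z)$ (which is what defines $A$, and exists because $(f_j(z))_j$ is decreasing and bounded below by the compactness hypothesis) to locally uniform convergence via Dini's theorem. I would begin with three observations. (a) Since $\bigcup_jY_j$ lies in a compact set $K$, so does $\bigcup_j\cl{Y_j}$; hence $(\cl{Y_j})_j$ is a sequence in the compact metric space $\cal K(K)$, every subsequence has a Hausdorff‑convergent sub‑subsequence, and Fact \ref{Cynk} applies to each such limit. (b) As $Z$ is open and each $f_j$ is continuous, any point of $\cl{Y_j}$ whose $\pi$‑image lies in $Z$ already lies in $Y_j$; equivalently $\cl{Y_j}\cap\pi^{-1}(Z)=Y_j$ and $\cl{Y_j}\sm Y_j\sub\pi^{-1}(\bd Z)$. (c) The sequence $(f_j)_j$ is decreasing, consists of continuous functions, and converges pointwise on $Z$ to the continuous function $f$, so by Dini's theorem (Fact \ref{Dini}) it converges to $f$ uniformly on every compact subset of $Z$.

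Next let $B$ be the Hausdorff limit of some subsequence of $(\cl{Y_j})_j$; replacing the sequence by that subsequence, assume $\cl{Y_j}\to B$. I claim $B\cap\pi^{-1}(Z)=A$. For $A\sub B$: given $z\in Z$ we have $(z,f_j(z))\in Y_j\sub\cl{Y_j}$ and $(z,f_j(z))\to(z,f(z))$, so condition $2$ of Fact \ref{Cynk} gives $(z,f(z))\in B$; since $A\sub\pi^{-1}(Z)$ this yields $A\sub B\cap\pi^{-1}(Z)$. Conversely, let $y=(z,t)\in B$ with $z\in Z$. By condition $1$ of Fact \ref{Cynk} there are $y_j\in\cl{Y_j}$ with $y_j\to y$; since $\pi(y_j)\to z$ and $Z$ is open, $\pi(y_j)\in Z$ for all large $j$, so by (b) we may write $y_j=(z_j,f_j(z_j))$ with $z_j=\pi(y_j)\to z$. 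Choose a compact neighbourhood $K'$ of $z$ with $K'\sub Z$; for large $j$ we have $z_j\in K'$, whence $|f_j(z_j)-f(z)|\leqs\sup_{K'}|f_j-f|+|f(z_j)-f(z)|\to 0$ by (c) and continuity of $f$. So $t=f(z)$ and $y\in A$, proving the claim.

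Consequently every subsequential Hausdorff limit of $(\cl{Y_j})_j$ meets $\pi^{-1}(Z)$ in exactly $A$; in particular, once $\lim_H(\{\cl{Y_j}\})$ is known to exist it equals $A$ on $\pi^{-1}(Z)$, which is the assertion. The step I expect to be the real obstacle is the existence of the Hausdorff limit of the whole sequence — i.e. that the various subsequential limits cannot genuinely disagree over $\pi^{-1}(\bd Z)$. This part is truly uncontrolled by the argument above: Dini only yields uniform convergence on compact subsets of the open cell $Z$, and points of $Y_j$ may "escape to $\bd Z$", so the limit can be strictly larger over $\pi^{-1}(\bd Z)$ than $\cl A\sm A$. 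I would settle it using monotonicity: along any definable arc approaching a point $w\in\bd Z$ the limit of $f_j$ (which exists by o‑minimality) is monotone in $j$, forcing the boundary parts $\cl{Y_j}\sm Y_j\sub\pi^{-1}(\bd Z)$ to stabilise in the Hausdorff sense as well; combined with the fibre‑wise identification over $Z$ this gives convergence of $(\cl{Y_j})_j$ and the stated equality.
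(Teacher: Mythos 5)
Your argument is essentially the paper's own proof: one checks the two conditions of Fact \ref{Cynk} over $\pi^{-1}(Z)$, using Dini's theorem (Fact \ref{Dini}) on a compact box inside $Z$ to upgrade the pointwise convergence to uniform convergence there (you have merely swapped the labels of conditions 1 and 2 of Fact \ref{Cynk}, but the content is right). Your closing worry about the existence of the Hausdorff limit of the whole sequence is not addressed by the paper either --- its proof, like the main body of yours, only identifies the (subsequential) limit over $\pi^{-1}(Z)$ --- so your monotonicity sketch for that point goes beyond, and is not needed to match, what the paper actually proves.
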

\begin{proof}
First of all, by definition $A$ satisfies the second condition of Fact \ref{Cynk}. 

\par Let $y_j\in Y_j$ such that $(y_j)$ converges to $y\in \pi^{-1}(Z)$. There is a bounded and closed box $B\subseteq Z$ and $N\in \N$ such that for every $j>N$, $\pi(y_j)\in B$. 
By Fact \ref{Dini}, the sequence $\{f_j{\upharpoonright B}:\; j\in \N\}$ converges uniformly to $f_{\upharpoonright B}$ and thus, $y\in A$. 
 \end{proof}
\subsection{On semibounded structures}
In this subsection we present some properties of semibounded structures. See \cite{mpp} for the basis on semibounded structures. 
\begin{defn} 
We say that a continuous function definable in \Cal R is a \emph{pole} if it is a bijection between a bounded and an unbounded interval. We say that \cal R is {\em semibounded} if it does not define a pole.
\par We say that a set $X$ is a \emph{cone} if   $$X=B+\sum_{a\in (\R^{>0})^l}\sum_i a_iv_i:=\{x\in \R^n\,:\; \exists b\in B,\, a\in (\R^{>0})^l,\; x=b+\sum_ia_iv_i\}$$
where $B$ is a bounded definable set, $v_1,\ldots,v_l$ are linearly independent vectors over $\R$ and for every $x\in X$, there are unique $b\in B$ and $a \in (\R^{>0})^l$ with $x=b+\sum_ia_iv_i$. 
\end{defn}
\begin{fact}\label{Ed}(see \cite[Fact 1.6]{ed-sbd})
The following are equivalent:
\begin{enumerate}
\item \cal R is semibounded
\item Every definable function from $\R$ to $\R$ is ultimately affine.
\item There is no definable real closed field with domain $\R$ such that the ordering agrees with $<$.
\item  Every definable set can be decomposed into finitely many cones. 
\end{enumerate} 
\end{fact}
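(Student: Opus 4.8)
The plan is to establish the cycle $(4)\Rightarrow(2)\Rightarrow(1)\Rightarrow(4)$ and, separately, the equivalence $(1)\Leftrightarrow(3)$; of these, $(1)\Rightarrow(4)$ is the real content --- it is the cone (structure) theorem for semibounded expansions --- while the remaining implications are comparatively soft.

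For $(4)\Rightarrow(2)$, let $f:\R\to\R$ be definable, let $\pi:\R^2\to\R$ be the projection to the first coordinate, and decompose the graph $G$ of $f$ into finitely many cones $C_1,\dots,C_k$. Since $\bigcup_j\pi(C_j)=\R$ is unbounded above, some $\pi(C_j)$ is unbounded above; fix this $C_j=B+\sum_{i=1}^{l}\R^{>0}v_i$. If $l=2$ then $\{\sum_ia_iv_i:a_i>0\}$ has nonempty interior in $\R^2$, so $C_j$ would contain an open box and could not be contained in the graph $G$; if $l=0$ then $C_j=B$ has bounded projection; hence $l=1$, say $C_j=B+\R^{>0}v$, and unboundedness above of $\pi(C_j)=\pi(B)+\R^{>0}v_1$ forces $v_1>0$. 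Rescaling $v$ so that $v_1=1$, for every $x$ above $\sup\pi(B)$ the vertical fibre of $C_j$ over $x$ equals $v_2x+\{b_2-v_2b_1:(b_1,b_2)\in B\}$, which is nonempty and, since $C_j\subseteq G$, a singleton; so $\{b_2-v_2b_1:(b_1,b_2)\in B\}$ is a single point $e$ and $f(x)=v_2x+e$ for all large $x$. For $(2)\Rightarrow(1)$ I argue contrapositively: from a pole, after composing with the group-definable translations and the negation $x\mapsto-x$ and passing to a monotone piece (monotonicity theorem), I obtain a definable increasing bijection $p$ from a bounded open interval onto $(0,\infty)$; then the definable function equal to $p^{-1}$ on $(0,\infty)$ and to $0$ elsewhere is strictly increasing and bounded on $(0,\infty)$, hence affine on no final segment, so $(2)$ fails.

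The substantial part is $(1)\Rightarrow(4)$, and I would prove it by induction on the ambient dimension $n$. For $n=1$ this is immediate from o-minimality: points, bounded intervals and rays are cones, and $\R$ is a union of three cones. For the step, cell decomposition in $\cal R$ reduces to the case where $X$ is the graph of a definable continuous function $f$ over a definable base $D\subseteq\R^{n-1}$ --- or the band between two such graphs --- and by the inductive hypothesis I may take $D$ to be a single cone $B+\sum_i\R^{>0}v_i$. The crucial lemma, and the only place where semiboundedness (equivalently $(2)$) is used, is that $f$ is affine along the unbounded directions up to a bounded error: there is an $\R$-linear $\lambda$ such that $(b,a)\mapsto f\!\left(b+\sum_ia_iv_i\right)-\lambda\!\left(\sum_ia_iv_i\right)$ is bounded; then, since $\lambda$ is linear, lifting each $v_i$ to $(v_i,\lambda(v_i))$ and absorbing the bounded remainder into the base realises the graph of $f$ as a finite union of cones. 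I expect this affine-approximation lemma, together with the bookkeeping that makes the induction uniform across the finitely many cells, to be the main obstacle; this is precisely the heart of the structure theorems of Peterzil and Edmundo, and is what \cite{ed-sbd} (and the references in \cite{mpp}) establish.

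Finally, for $(1)\Leftrightarrow(3)$: if $\cal R$ defined a real closed field $(\R,\oplus,\otimes,<)$, then field inversion would be a definable bijection from the bounded interval $(0_F,1_F)$ onto the unbounded interval $(1_F,+\infty)$, i.e.\ a pole, which gives $\neg(3)\Rightarrow\neg(1)$. Conversely, if $\cal R$ is not semibounded then, since $\cal R$ expands an ordered group, the growth dichotomy of Miller and Starchenko (see also \cite{ed-sbd}) provides a definable real closed field on some bounded open interval $J$; a pole supplies, after splitting $J$ and composing with the pole and with $x\mapsto-x$, a definable increasing bijection from $J$ onto $\R$, and transporting the field along it yields a definable real closed field with domain $\R$ whose order agrees with $<$, giving $\neg(1)\Rightarrow\neg(3)$.
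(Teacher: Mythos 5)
The paper itself gives no proof of this statement: it is quoted as Fact~1.6 of Edmundo's structure-theorem paper \cite{ed-sbd}, so the paper's ``proof'' is that citation. Measured against that, your proposal is a sound organization of the equivalence, and your soft implications are essentially complete: the fibre computation for $(4)\Rightarrow(2)$ (ruling out $l=2$ by interior, $l=0$ by boundedness, then reading off $f(x)=v_2x+e$ for large $x$) is correct, as is the construction in $(2)\Rightarrow(1)$ of a bounded, strictly increasing definable function on $(0,\infty)$ from a pole, and the inversion map argument for $\neg(3)\Rightarrow\neg(1)$ (with the minor point that continuity of the candidate pole should be secured either by transfer in the definable real closed field or by the monotonicity theorem).

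Two caveats. First, your treatment of $(1)\Rightarrow(4)$ is a deferral, not a proof: the affine-up-to-bounded-error lemma, even granted, does not by itself yield a cone decomposition --- one still has to produce the base $B$, the uniqueness of the representation $x=b+\sum_i a_iv_i$, and uniformity over the finitely many cells --- and this is exactly the content of the structure theorems of Peterzil (semialgebraic case) and Edmundo \cite{ed-sbd}. You say as much, and since the paper also only cites \cite{ed-sbd}, this is acceptable, but it should be flagged as the point where the statement is not being reproved. Second, the attribution in $\neg(1)\Rightarrow\neg(3)$ is off: the growth dichotomy of Miller (and Miller--Starchenko) concerns expansions of the real field; what produces a definable real closed field on a bounded interval from a pole is the Peterzil--Starchenko trichotomy (or the results of Marker--Peterzil--Pillay \cite{mpp} and Edmundo \cite{ed-sbd} for reducts of ordered groups). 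Granted that, transporting the field structure along an order-preserving definable bijection onto $\R$ works as you describe, provided you note that the interval field's ordering agrees with $<$, which those results supply.
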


\smallskip
\par In the semibounded setting, there is another  notion of dimension that characterizes the degree of unboundedness of a definable set.
\begin{defn} Let $B$ and $\{v_1,\ldots,v_l\}$ be as in  Fact \ref{Ed} and 
 $X$ be the  cone  $B+\sum_{a\in (\R^{>0})^l}a_iv_i$. We define the \emph{long-dimension of} $X$ ($\ldim(X)$) to be $l$. For $X$ a semibounded set, we define the long dimension of $X $ to be the maximal long dimension of a cone contained in $X$. 
 We remark that this notion of dimension is preserved under  affine bijections. 
\end{defn}
\begin{fact}\label{ldim}(see \cite[Lemma 3.6]{el-sbd})
For $\{X_t\,:\; t\in A\}$ a semibounded family  such that  $A$ is bounded, we have $$\ldim(\bigcup_{t\in A}X_t)=\max\big(\ldim(X_t),\, t\in A\big).$$
\end{fact}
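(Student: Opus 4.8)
One inequality is immediate: since each $X_t\subseteq\bigcup_{t\in A}X_t$ and $\ldim$ is monotone under inclusion (a cone contained in $X_t$ is a cone contained in the union), we get $\ldim(\bigcup_{t\in A}X_t)\ge\max_{t\in A}\ldim(X_t)$. So the content is the reverse inequality, and the plan is to turn the boundedness of $A$ into control on cone directions. First I would pass to the graph of the family, $X=\{(t,x)\in\R^m\times\R^n:\ t\in A,\ x\in X_t\}$, so that $\bigcup_{t\in A}X_t=\pi(X)$, where $\pi$ is the projection onto the last $n$ coordinates, while the complementary projection $\pi'$ has bounded image $\pi'(X)=A$. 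Applying Fact \ref{Ed}(4) inside the ambient semibounded structure, write $X=\bigcup_{j=1}^{r}D_j$ with each $D_j=E_j+\sum_{k=1}^{q_j}b_k w_{j,k}$ a cone (bounded base $E_j\ne\es$, $\R$-linearly independent directions $w_{j,k}$, unique representations), so $\ldim(D_j)=q_j$. The key observation is that $\pi'(D_j)=\pi'(E_j)+\sum_k b_k\,\pi'(w_{j,k})$ is contained in the bounded set $A$ while $\pi'(E_j)$ is bounded and the $b_k$ range over all of $\R^{>0}$; this forces $\pi'(w_{j,k})=0$ for all $j,k$, i.e.\ every direction of every $D_j$ lies in $\{0\}^m\times\R^n$.

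Next I would read off what this says fibrewise. Writing $w_{j,k}=(0,w_{j,k}')$, one checks that for $t\in\pi'(D_j)$ the fibre $(D_j)_t=\{x:(t,x)\in D_j\}$ equals $(E_j)_t+\sum_k b_k w_{j,k}'$, and that this is again a cone: the base $(E_j)_t$ is bounded and nonempty, the $w_{j,k}'$ are linearly independent, and uniqueness of representations is inherited from $D_j$. Hence $\ldim\big((D_j)_t\big)=q_j=\ldim(D_j)$ for every such $t$, and since $(D_j)_t\subseteq X_t$ this yields $\max_{t\in A}\ldim(X_t)\ge\ldim(D_j)$ for each $j$. On the other hand, because $\pi'$ kills all the directions of $D_j$, the set $\pi(D_j)=\pi(E_j)+\sum_k b_k w_{j,k}'$ is a bounded set together with a cone on the vectors $w_{j,k}'$; a short recession argument then shows that any cone contained in $\pi(D_j)$ has its recession directions inside $\mathrm{span}(w_{j,1}',\dots,w_{j,q_j}')$, so $\ldim\big(\pi(D_j)\big)\le q_j=\ldim(D_j)$.

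Finally I would combine these using the additivity of the long dimension over finite unions (an elementary property in the theory of semibounded structures; see \cite{mpp}, \cite{ed-sbd}): $$\ldim\Big(\bigcup_{t\in A}X_t\Big)=\ldim\big(\pi(X)\big)=\max_{1\le j\le r}\ldim\big(\pi(D_j)\big)\le\max_{1\le j\le r}\ldim(D_j)\le\max_{t\in A}\ldim(X_t),$$ which together with the trivial inequality gives equality. The main obstacle is entirely the bookkeeping in the middle two paragraphs: verifying that boundedness of $A$ genuinely confines all cone directions to $\{0\}^m\times\R^n$, and that the fibres of such cones along the bounded coordinates are again cones of the same long dimension; once that is in place the finite-union additivity of $\ldim$ closes the argument, and the boundedness hypothesis on $A$ is seen to be exactly what is needed (without it the fibrewise cones in $X$ could have directions with nonzero $\pi'$-component, and the union could pick up extra long dimension).
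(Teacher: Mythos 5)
The paper does not prove this statement at all: it is quoted as a Fact from \cite[Lemma 3.6]{el-sbd}, so there is no internal proof to compare with. Judged on its own, your argument is essentially sound and is a reasonable self-contained derivation from the cone decomposition theorem (Fact \ref{Ed}(4)). The key step is correct: if some direction $w_{j,k}$ of a cone $D_j\subseteq X$ had $\pi'(w_{j,k})\neq 0$, then fixing a base point and letting the $k$-th coefficient tend to infinity produces a ray in $\pi'(D_j)\subseteq A$ with nonzero direction, contradicting boundedness of $A$; and the fibre computation $(D_j)_t=(E_j)_t+\sum_k b_k w'_{j,k}$, with uniqueness and independence inherited, does give a cone of long dimension $q_j$ inside $X_t$, so $\max_t\ldim(X_t)\geq q_j$. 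Your recession step also completes correctly: for a cone $B'+\sum_i a_iv_i\subseteq\pi(E_j)+\{\sum_k b_kw'_{j,k}\}$, write $p+sv_i=e(s)+\sum_k b_k(s)w'_{j,k}$, divide by $s$, use boundedness of $\pi(E_j)$ and linear independence of the $w'_{j,k}$ to get bounded coefficients and conclude $v_i\in\mathrm{span}(w'_{j,1},\dots,w'_{j,q_j})$, whence $\ldim(\pi(D_j))\leq q_j$; note this detour is genuinely needed, since $\pi(D_j)$ need not itself be a cone (uniqueness of representation can fail under projection).

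The one point you should not wave through is the finite-union additivity $\ldim(S_1\cup\dots\cup S_r)=\max_i\ldim(S_i)$, which your final chain of inequalities relies on via $\ldim(\pi(X))=\max_j\ldim(\pi(D_j))$. This is not proved in \cite{mpp} or \cite{ed-sbd} (neither develops the long dimension), and it is not elementary from the definition: a cone contained in a finite union need not lie in a single piece, and showing that some piece still contains a cone of the same long dimension requires either a generic-point argument or an induction through cone decompositions. It is a standard property established in \cite{el-sbd} itself, alongside the very lemma you are proving; since the finite case is strictly weaker than the statement about definable families over a bounded index set, there is no circularity, but you should either cite it from the right source or supply a proof of it.
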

\begin{proposition}\label{lm sb}
Let $\{X_t\subseteq \R^n:\; t\in A\subseteq \R^m\}$ be a semibounded family. Then there are finitely many semibounded families $\{B_{i,t}\subseteq \R^n:\; t\in C_i\}$, $D_i\subseteq \R^n$ some bounded sets, $V_i$ some vector subspace of $\R^n$ and $\{a_{i,t}:\; t\in A\}$ such that:
\begin{enumerate}
\item For every $i$ $C_i$ and $D_i$ are bounded.
\item For every $t\in A$, for every $i$ there is $t_i\in C_i$ with $X_t\subseteq \bigcup_i B_{i,t_i}+V_i+a_{i,t}$ and has the same dimension.
\item For every $t\in C_i$ $B_{i,t}\subseteq D_i$.
\item For every $t\in A$, $x\in B_{i,t}+V_i+a_{i,t}$ there are unique $b\in B_{i,t}$ and $v\in V_i$ such that $x=b+v+a_t$.
\end{enumerate} 

\end{proposition}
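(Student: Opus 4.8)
The plan is to reduce the statement to the cone decomposition for semibounded sets (Fact~\ref{Ed}(4)), applied to the graph of the family, and then to work out how a single cone in $\R^{m+n}$ meets the fibres of the projection onto the first $m$ coordinates. So first I would set $Z=\{(t,x):\;t\in A,\ x\in X_t\}\sub\R^{m+n}$, which is definable in $\cal R$ and hence semibounded, and write $Z=\bigcup_{i=1}^{r}C_i$ with each $C_i$ a cone, $C_i=E_i+\sum_{a\in(\R^{>0})^{l_i}}\sum_j a_jw^i_j$, $E_i$ bounded and $w^i_1,\dots,w^i_{l_i}$ $\R$-linearly independent. Since $X_t=\bigcup_i\{x:\;(t,x)\in C_i\}$ and dimension commutes with finite unions, it is then enough to treat a single $C_i$, producing one family $\{B_{i,t}\}$, one bounded set $D_i$, one subspace $V_i$ and one assignment $t\mapsto a_{i,t}$ with $\{x:\;(t,x)\in C_i\}\sub B_{i,t_i}+V_i+a_{i,t}$ of the same dimension.

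Next I would slice the cone. Write $w^i_j=(u^i_j,v^i_j)$ with $u^i_j\in\R^m$ and $v^i_j\in\R^n$, and split $\{1,\dots,l_i\}=J_0\cup J_1$ where $J_0=\{j:\;u^i_j=0\}$. A point $(t,x)=e+\sum_j a_jw^i_j$ of $C_i$, with $e=(e',e'')\in E_i$, then satisfies $t=e'+\sum_{j\in J_1}a_ju^i_j$ and $x=e''+\sum_{j\in J_1}a_jv^i_j+\sum_{j\in J_0}a_jv^i_j$. After a routine finite case split I may assume $\{u^i_j:\;j\in J_1\}$ is $\R$-linearly independent — the surplus directions satisfy a linear relation among the $u^i_j$ and so contribute fixed vectors, which get folded into the subspace $V_i$ below. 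Under this assumption $(a_j)_{j\in J_1}$ is, wherever it exists, a linear function of $t-e'$; so with $V_i:=\operatorname{span}\{v^i_j:\;j\in J_0\}$, an appropriate linear map $L\colon\R^m\to\R^n$, and
$$\widehat{B}_t:=\{e''-L(e'):\; e=(e',e'')\in E_i,\ (a_j(t,e'))_{j\in J_1}\in(\R^{>0})^{J_1}\},$$
I get $\{x:\;(t,x)\in C_i\}\sub\widehat{B}_t+L(t)+V_i$, with $\widehat{B}_t$ contained in a fixed bounded box coming from $E_i$.

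Two points then remain. For clause (4) I would fix a linear complement $U_i$ of $V_i$ in $\R^n$ and replace $\widehat{B}_t$ by its image $B_t$ under the projection along $V_i$: this keeps $B_t+V_i=\widehat{B}_t+V_i$, hence still contains the fibre, and now the representation $x=b+v+L(t)$ with $b\in B_t\sub U_i$, $v\in V_i$, is unique because $U_i\cap V_i=\{0\}$. For the boundedness of the index set in clause (1), the key observation is that the only $t$-dependence of $\widehat{B}_t$ enters through the finitely many numbers $\la d_j,t\ra$ (the coefficients of the linear map solving for the $a_j$), and since $\la d_j,e'\ra$ ranges over a bounded interval as $e'$ runs through the bounded projection of $E_i$, outside a fixed bounded box of values $(\la d_j,t\ra)_j$ the set $\widehat{B}_t$ is either empty or equal to its constraint-free value — finitely many constant families — while inside that box I can reparametrise by $s=(\la d_j,t\ra)_j$, which ranges over a bounded set $C_i$. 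Setting $a_{i,t}:=L(t)$ and taking $D_i$ to be the fixed bounded box above settles clauses (1) and (3). For clause (2), since passing from a cone to its linear span does not change the dimension of a sumset, $\dim(B_{i,t_i}+V_i+a_{i,t})=\dim(\widehat{B}_t+V_i)=\dim\{x:\;(t,x)\in C_i\}$, and taking the maximum over $i$ gives $\dim\bigcup_i(B_{i,t_i}+V_i+a_{i,t})=\dim X_t$.

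The hard part will be the bounded reparametrisation in clause (1): for a general definable family of subsets of a fixed bounded box there is no bounded reindexing, and the argument above goes through only because the cone structure confines the $t$-dependence of $\widehat{B}_t$ to finitely many affine functionals of $t$ with bounded relevant range — this is exactly where semiboundedness is used. The other place needing care is the reduction to $\{u^i_j:\;j\in J_1\}$ linearly independent; this is routine, but one has to check that the surplus directions discarded there genuinely land inside $V_i$ after being pushed through the vectors $v^i_j$.
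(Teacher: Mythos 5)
Your strategy runs parallel to the paper's: both apply the cone decomposition of Fact \ref{Ed}(4) to the graph of the family and extract from the cone directions the subspace parallel to the fibres (your $V_i$ is exactly the projection to $\R^n$ of $(\R^n\times\{0\})\cap\la w^i_1,\ldots,w^i_{l_i}\ra$, the paper's $V_0$). But there is a genuine gap in your reduction to the case where $\{u^i_j:\ j\in J_1\}$ is linearly independent. After you discard the surplus directions, the correct condition for $e=(e',e'')\in E_i$ to contribute to the fibre over $t$ is that $t-e'$ can be written as $\sum_{j\in J_1}a_ju^i_j$ with \emph{all original} $a_j>0$, not that the coefficients solved with respect to the retained independent subset are positive; the two conditions differ exactly when the discarded linear relations have negative coefficients, and then the inclusion $\{x:(t,x)\in C_i\}\sub\widehat B_t+L(t)+V_i$ fails. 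Concretely, take $m=n=1$, $E_i=\{(0,0)\}$, $w^i_1=(1,0)$, $w^i_2=(-1,1)$: this is a cone with the uniqueness property, its fibre over $t=-1$ is $(1,\infty)$, but your $\widehat B_{-1}$ is empty because the solved coefficient $a_1=t$ is negative (retaining $w^i_2$ instead merely moves the failure to $t\geq 0$). The fix is to define $\widehat B_t$ with the constraint ``$t-e'$ lies in the open positive span of all the $u^i_j$, $j\in J_1$''; since that set is cut out by finitely many linear conditions, your truncation/reparametrisation argument for clause (1) survives unchanged.

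The second soft spot is clause (2): ``passing from a cone to its linear span does not change the dimension of a sumset'' is a slogan, not a proof that $\dim(\widehat B_t+V_i+L(t))=\dim\{x:(t,x)\in C_i\}$ fibre by fibre, and with the constraint dropped (the other easy way to restore containment) it can genuinely fail for special values of $t$ where only a lower-dimensional part of $E_i$ is active. You need either a fibrewise argument --- the positivity conditions are open, so through each of its points the fibre contains a relatively open piece of a $V_i$-coset, whence $\dim F_t=\dim\pi_{V_i}(F_t)+\dim V_i$ where $\pi_{V_i}$ is the projection along $V_i$, and with the corrected constraint $\pi_{V_i}(\widehat B_t+L(t))=\pi_{V_i}(F_t)$ --- or the paper's device: replace the cone by $Y=B+V$ with $V$ the full span of the cone directions, check that the uniqueness property transfers to $Y$ (translate back into the cone by adding a large positive vector), and deduce $\dim X_t=\dim Y_t$ from the global count $\dim X=\dim B+l=\dim Y$ together with uniform cell decomposition; that route also yields clause (4) with no extra work, which is precisely where your coordinate bookkeeping is most fragile.
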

\begin{proof}
First, by uniform cell decomposition, we may assume that for every $t\in A$, $X_t$ is a cell of dimension $k$.
Let $X=\bigcup_{t\in A}X_t\times\{t\}$. By Fact \ref{Ed} we may assume that $X$ is a cone of the form $B+\sum_{a\in (\R^{>0})^l}\sum_i a_iv_i$ where $B$ is a bounded definable set, the $v_i$'s are independent vectors in $\R^{n+m}$ and $(*):$ for every $x\in X$ there are unique $b\in B$ and $a\in (\R^{>0})^l$ such that $x=b+av$.
\par Let $V=\la v_1,\ldots,v_l\ra$ and let $$Y=B+V.$$
It is clear that $X\subseteq Y$.
We first show that   $Y$ has $(**):$ for every $x\in Y$, there are unique $b\in B,\,a\in \R^l$ such that $x=b+\sum_ia_iv_i$. Suppose there are $x\in Y$, $b_1,b_2\in B$, $a_1,\,a_2\in \R^l$ such that $$x=b_1+a_1v=b_2+a_2v.$$
Let $a=(\max(|(a_1)_i|,|(a_2)_i|)+1)_i$. We have that $$b_1+(a_1+a)v=b_2+(a_2+a)v=x+av\in X$$ and by $(*)$, $b_1=b_2$ and $a_1=a_2$. 
\par We show that $\dim(X)=\dim(Y)$.   By $(*)$, $\dim(X)=\dim(B)+l$ and by $(**)$, $\dim(Y)=\dim(B)+l$, thus we have the result. For $t\in A$, let $Y_t=\R^n\times\{t\}\cap Y$. Let $\pi_2$ be the projection on the last $m$ coordinates and let $E=\pi_2(Y)$. For every $t\in A$, we show that $\dim(X_t)=\dim(Y_t)$. Without loss of generality, we may assume that for every $t,t'\in A$, $\dim(Y_t)=\dim(Y_{t'})=s$.  Since each $X_t$ is a cell of dimension $k$, $$\dim(X)=k+\dim(A)=\dim(Y)=s+\dim(E).$$ Moreover, since $X\subseteq Y$, for every $t\in A$, $$X_t\subseteq Y_t \text{ and } A\subseteq E.$$ Thus $$\dim(X_t)\leq \dim(Y_t)\text{ and }\dim(A)\leq \dim(E)$$ and we have the equality. 
\par We define an equivalence relation $\sim $ on
$\R^{m}$:
$$x\sim y \text{ if and only if there is $v\in V$ such that }\R^n\times\{x\}+v=\R^n\times\{y\}.$$ Let $V_0=\R^n\times\{0\}\cap V$ and let $V_1$ be a supplementary of $V_0$ in $V$: $V_0\oplus V_1=V$.  
 From $(**)$, it is easy to see that for every $t\sim t'$ there is a unique $v\in V_1$ such that $$\R^n\times\{t\}+v=\R^n\times \{t'\}.$$ Let $\gamma:\pi_2(B)\to \pi_2(B)$ be a choice function for $\sim$ and for $t\in \R^m$ let $B_t=B\cap \R^n\times\{t\}$. Let $$f:\bigcup_{t\in \pi_2(B)}B_t\to \R^{n+m},$$ $$x\in B_t \mapsto x+v \text{ where $v\in V_1$ is the unique such that $x+v\in B_{\gamma(t)}$}.$$ 
 Note that $f$ is injective by $(**)$. For $t\in \R^m$, let $B_{\cl{t}}=\bigcup_{t'\sim t} f(B_{t'})$. Note that $B_{\cl{t}}\subseteq \R^n\times\{\gamma(t)\}$. 
 We finish the proof with the following claim:
 \begin{claim}
 For every $t\in R^m$, $Y_t\times\{t\}=B_{\cl{t}}+V_0+a_t$ where $a_t$ is the unique element in $V_1$ such that $\R^n\times\{t\}=\R^n\times\{\gamma(t)\}+a_t$.
 \end{claim}   
 \begin{proof}
 Let $(x,t)\in Y_t\times\{t\}$. By $(**)$, there are unique $b\in B$ and $v\in V$ such that $(x,t)=b+v$. There are unique $v_0\in V_0$, $v_1,v'_1\in V_1$ such that $v'_1+f(b)=b$ and $v=v_0+v_1$. Thus 
 $$(x,t)=b+v=f(b)+v_0+v_1+v'_1.$$
 The other inclusion is easy and we have the result.
 \end{proof}
It is then easy to check that $\{\pi_1(B_{\cl{t}}):\; t\in \pi_2(B)\}$, $\pi_1(V_0)$ and $\{\pi_1(a_t):\; t\in A\}$ satisfy  the properties $(1)-(5)$.
\end{proof}
\section{The proof}
\begin{theorem}\label{L-DEM}
Let $X=\{X_x\subseteq \R^m:\; x\in A\subseteq \R^p\} $ be a  family of sets of dimension $n$ definable in $\WR^\#$. Then there is a uniform decomposition of the  $X_x$'s into finitely many $\cal R$-embedded manifolds.
\end{theorem}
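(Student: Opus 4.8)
The plan is to prove the statement by induction on $n = \dim(X_x)$, reducing dimension at each stage by peeling off the part of each fiber where a well-chosen projection behaves well and treating the leftover (lower-dimensional) part inductively. First I would fix, by uniform cell decomposition in $\cal R$ applied to a family definable in $\cal R$ that contains $\{X_x\}$ as a small subfamily (Fact \ref{FM} gives us that $X_x \subseteq \cl{f(P^k \times \{x\})}$ for some $\cal R$-definable $f$, so each $X_x$ is contained in a small union of $\cal R$-cells), finitely many projections $\pi_1, \ldots, \pi_r$ onto $n$ coordinates and reduce to the case where $X_x$ lies inside a union $\bigcup_{t \in S_x} C_t$ of $\cal R$-cells, each $C_t$ being the graph of a continuous function over an open connected $\pi_i(C_t)$. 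By Fact \ref{pi good}, for a suitable choice of the projection $\pi = \pi_i$ the set of points of $\pi(X_x)$ over which the fiber of $X_x$ is positive-dimensional is itself of dimension $<n$; call that bad set $Z_x$ and put $X'_x = X_x \cap \pi^{-1}(Z_x)$, which by Lemma \ref{DIM} has dimension $<n$, so it is handled by the induction hypothesis.

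Next I would work on the good part $X_x \setminus X'_x$, where over each point of $\pi(X_x) \setminus Z_x$ the fiber is finite (indeed $0$-dimensional, hence by sparseness and i-minimality uniformly finite, or at least small). Using definable choice I would select, uniformly in $x$, a decomposition of this good part into pieces each of which is a graph of a continuous function over an open subset of the $\pi$-image; the continuity is arranged by passing to the relative interior of each such graph inside the appropriate $\cal R$-cell $C_t$ and throwing the (lower-dimensional) frontier back into the inductive pile via Lemma \ref{DIM}. To get condition (2) in the definition of $\cal R$-EM (the $\pi$-images of the pieces are pairwise equal or disjoint), I would intersect and subdivide the finitely many relevant $\pi(C_t)$'s into a common refinement by open connected $\cal R$-cells; since there are finitely many cells $C_t$ in play for a fixed configuration, and the configurations vary in an $\cal R$-definable family, this refinement is uniform in $x$. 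This produces, uniformly in $x$, finitely many $\cal R$-EM whose union is $X_x \setminus X'_x$.

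I expect the main obstacle to be the uniformity and the bookkeeping needed to make the choice function and the common refinement of $\pi$-images genuinely \emph{uniform} in the parameter $x$ while keeping the number of pieces bounded: a priori the number of $\cal R$-cells $C_t$ needed to cover $X_x$ depends on $x$ through the small index set $S_x$, so one has to argue that only finitely many "shapes" of cells occur (this is where Fact \ref{FM} and uniform o-minimal cell decomposition of the ambient $\cal R$-definable family are essential) and that the subdivision of their $\pi$-images stabilizes. A secondary technical point is ensuring the pieces are \emph{disjoint} (condition (1) of an $\cal R$-decomposition) rather than merely covering; this is routine to arrange by ordering the finitely many cells and subtracting, but it interacts with continuity and must be done on relative interiors, pushing yet more lower-dimensional debris into the inductive step — one must check this debris is still a small subfamily of an $\cal R$-definable family so that Lemma \ref{DIM} and the induction hypothesis apply. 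Once both the good part and $X'_x$ are decomposed, concatenating the two finite lists of $\cal R$-EM finishes the induction, the base case $n=0$ being immediate since a small set is trivially an $\cal R$-EM.
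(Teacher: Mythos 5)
There is a genuine gap at the very first step of your plan, and it is precisely the point that the paper's whole argument is built to overcome. You assert that Fact \ref{FM} shows each $X_x$ is contained in a small union of $\cal R$-cells. Two problems. First, Fact \ref{FM} only applies when the fibers in one (the last) coordinate have no interior, so it is tailored to the codimension-one situation reached after projecting; it says nothing when $\dim X_x=m$, i.e.\ when $X_x$ has interior (by sparseness and i-minimality no set $\overline{f(P^k)}$ can contain an open set), and your induction on $n$ alone never treats that case, whereas the paper needs a completely separate argument for $n=m$ (decomposing $\bd(X)$ into $\cal R$-EM and carving $X$ into the regions $\Delta(y)$ lying between consecutive boundary sheets). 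Second, and more seriously, the conclusion of Fact \ref{FM} is containment in the \emph{closure} $\overline{f(P^k\times\{x\})}$. Setting $Y_t=\Gamma(f_t)$ for $t\in P^k$, the set one must decompose is $\overline{\bigcup_{t\in P^k}Y_t}$, and the closure contains limit graphs --- pointwise limits of monotone sequences of the $Y_t$ --- which are in general not of the form $Y_t$ with $t\in P^k$, hence not visibly members of any $\cal R$-definable family with a small index set. Proving that these limits do belong to an $\cal R$-definable family (so that the closure really is a small union of $\cal R$-cells) is the technical heart of the paper: it uses the definability in $\cal R$ of the Hausdorff limits of an $\cal R$-definable family of compact sets (Theorem \ref{Hausdorff fam}), Dini's theorem and Cynk's criterion via Corollary \ref{Hlim=cl} and Claim \ref{cl 5}, and, when $\cal R$ is semibounded so that the compactification $x\mapsto x/(1+|x|)$ is unavailable, the cone analysis of Proposition \ref{lm sb} and Claims \ref{cl000}--\ref{cl111}. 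Your proposal never confronts the closure, so the sentence ``each $X_x$ is contained in a small union of $\cal R$-cells'' is unjustified; in the codimension-one case it is essentially equivalent to the theorem you are trying to prove.

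The surrounding bookkeeping you describe does parallel the routine parts of the paper's proof: discarding positive-dimensional fibers via Fact \ref{pi good}, controlling dimensions with Lemma \ref{DIM}, using definable choice and refining the projected images so that they are pairwise equal or disjoint correspond to Claims \ref{cl10} and \ref{cl1} and to the auxiliary claim in the $n=m$ case. (Also note that a $0$-dimensional fiber need not be finite in the i-minimal setting, only small, though this is a minor point.) But without an argument producing an $\cal R$-definable family that captures the limit graphs, and without a separate treatment of sets with interior, the induction does not close.
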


\begin{proof}
We are doing a double induction  on $n\leq m$. For $m=0$ or for  any $m$ and $n=0$, the result is obvious. We assume that  the result holds for $n$ and $m-1$. We also assume that the result holds for $n-1<m-1$ and $m$ and prove it for $n$ and $m$. We  deal with the case $n<m-1$, $n=m-1$ and $n=m$ separately.
\par   We first prove the following claims:
\begin{claim}\label{cl10}
Let $Z$ be a definable set of dimension $n$ and let $ \{ Z_t:\, t\in S\}$ be an \Cal R-decomposition of $Z$. Then there is a  decomposition of $Z$ into finitely many \Cal R-EM.
\end{claim}
 \begin{proof}  By definable choice, there is $\gamma:Z\to S$, $x\mapsto t$ such that $x\in Z_t$. For $t\in S$ let $Z'_t=\gamma^{-1}(t)$. Note that the $Z'_t$'s are no longer definable in \cal R but $Z'_t\subseteq Z_t$. By uniform cell decomposition, we may assume that the $Z_t$'s are cells and thus are homeomorphic via $\pi$ to some open cell in $\R^n$. By induction, there is an uniform decomposition of the $Z'_t$'s into \cal R-EM. For $t\in S$, let $Z''_t$ be the union of the \cal R-EM of dimension $n$ in its decomposition. For $t\in S$ let $$W_t=\bd(\pi(Z''_t))\text{ and let }W=\cl{\bigcup_{t\in S}W_t}.$$ Note that, by Lemma \ref{DIM}, $\dim(W)<n$. By induction there is a decomposition of $\R^n\setminus W$ into \cal R-EM $W_1,\ldots,W_s$ of the form $W_i=\bigcup_{t\in S_i}W_{i,t}$, where there is  family $\{W_{i,t}:\; t\in A_i\}$, definable in \Cal R, such that $S_i\subseteq A_i$ and $S_i$ is small. 
 \par Let $i$ such that $\dim(W_i)=n$ and let $t\in S_i$. Observe that, by definition of $W$, 
 $\pi^{-1}(W_{i,t})\cap Z$ is composed of connected components which are graphs of continuous functions over $W_{i,t}$. 
  Thus, if $W_1,\ldots,W_s$ are the \cal R-EM of dimension $n$ in the decomposition of $\R^n\setminus W$, $ \pi^{-1}(W_{i})\cap Z$  is an \cal R-EM. 
  \par By definition of  $W_1,\ldots, W_s$, $\dim(\R^n \sm \bigcup_{i\leq s}W_i)<n$ and we apply the induction hypothesis to $\pi^{-1}\big(\R^n \sm \bigcup_{i\leq s}W_i\big)\cap Z$ to get the result.  
\end{proof}

\begin{claim}\label{cl1}
Let $\{Y_x:\; x\in A\}$ be a definable family of sets of dimension $s<m$ such that for every $x\in A$  $X_x\subseteq Y_x$. If there is a uniform decomposition of the $Y_x$'s into $\cal R$-EM then there is such a decomposition for the $X_x$'s. 
\end{claim}
\begin{proof}Let $\{Y_{x,t}:\, t\in S_x\}$ be an \Cal R-decomposition of $Y_x$. Since all the $Y_{x,t}$'s are cells, they are uniformly homeomorphic to some open set via some projection. Thus by applying the induction hypothesis to $Y_{x,t}\cap X_x$ we get some \cal R-decomposition  $A_{x,t}=\{A_{x,t,t'}:\; t'\in S_{x,t}\}$ of $Y_{x,t}\cap X_x$. Thus $\{A_{x,t,t'}:\; t\in S_x,\, t'\in S_{x,t}\}$ is an \Cal R-decomposition of $X_x$ and we have the result by Claim \ref{cl10}. 
\end{proof}
\smallskip
 \par  {\em \textbf{\underline{Case \textbf{$n<m-1$}:}}}
 \medskip
 \par  Let $\pi$ be the projection on the first $m-1$ coordinates. By the induction hypothesis, there is  a decomposition of $\pi(X_x)$ into $\cal R$-EM. Thus we may assume that $\pi(X_x)$ is an \cal R-EM of dimension $n$ of the form $\bigcup_{t\in S_x}Y_{x,t}$. 
 It is easy to see that $\bigcup_{t\in S_x}\pi^{-1}(Y_{x,t})$ is an $\cal R$-EM of dimension $n+1<m$ and that it contains $X_x$. We apply Claim \ref{cl1} to get the result.
 \bigskip 

 \par {\em \textbf{\underline{Case $n=m-1$:}}} 
 \medskip
\par Since for every $x\in A$, $y\in \R^n$,  $\pi^{-1}(y)\cap X_x$ has dimension $0$, By Theorem \ref{FM} there is a function $f:\R^{p+n+k}\to \R$, definable in \Cal R, such that $$\pi^{-1}(y)\cap X_x\subseteq \cl{f(x,y,P^k)}.$$
By o-minimality,  we may assume that  $f$ is continuous (thus we do not have anymore that $\dom(f_t)=\R^n$). 
\par \textbf{From now, to simplify the notations we do not mention the $x$ in index for $x\in A$. Keep in mind that the following construction can be made uniformly for $x\in A$.}  
 \par For every  $t\in P^k$  let $Y_{t}=\Gamma(f_t )$ and  $Y=\cl{\bigcup_{t\in P^k}Y_{t}}$. Note that by Lemma \ref{DIM}, $\dim(Y)=n$ and by Claim \ref{cl1}, it is sufficient to prove that $Y$ admits a decomposition into \cal R-EM.


Thus, it is sufficient to prove that $Y$ admits an \cal R-decomposition. Let $Z=\bigcup_{t\in S}Y_t$, by Claim \ref{cl10}, there is a decomposition of $Z$ into finitely many \Cal R-EM. Thus, we may assume that $Z$ is an \Cal R-EM and let $\{W_t:\, t\in S'\}$ be a small subfamily of a family definable in \Cal R such that for every $t\in S$ there is $t'\in S'$, $\pi(Y_t)=W_{t'}$.  Moreover, by Fact \ref{pi good}, we may assume that $Y\cap \pi^{-1}(\bigcup_{t\in S'}W_t)$ has dimension $0$.
\begin{claim} \label{cl 00}
Let $A,B\subseteq Z$ be two graphs of continuous functions $g,h:W_t\to \R$ (for some $t\in S'$). If there is  $x\in W_t$ such that $g(x)<h(x)$ then for every $x\in W_t$ $g(x)<h(x)$. 
\end{claim}
\begin{proof}
We assume that it is not the case.
By the Intermediate Value Theorem, there is $x\in Z$ such that $g(x)=h(x)$ and $A\cap B\neq \emptyset$. 
That contradicts the definition of an \cal R-EM.
 \end{proof}
 \textbf{From now, in order to simplify the notations, we fix $W=W_t$ for some $t\in S'$.}
\begin{notation}\label{A<B}
In the setting of Claim \ref{cl 00}, if there are $A,B\subseteq Z$, two graphs of continuous functions over $W $  such that there is $z\in W$ with $\pi_m(\pi^{-1}(z)\cap A )<\pi_m(\pi^{-1}(z)\cap B)$ then this is true for every $z\in W$ and we denote this fact by $A<B$.
\end{notation}


\par we need now to split the proof in two cases: one for expansions of the real field and an other for \Cal R semibounded. In both cases we prove that there is small subfamily $\{A_t:\, t\in S'\}$ of a family of cells of dimension $n$ definable in  \Cal R such that $X\subseteq \bigcup_{t\in S'}A_t$. Thus applying Claim \ref{cl10}, we get the result.
\medskip
\par {\em \textbf{Case $1$: \cal R expands the real field:}} 
\\From now on, by replacing $Y$ by $\rho(Y)$, where $\rho$ is the homeomorphism  $$\rho: \R^{m}\to \R^m $$ 
$$\rho:x\mapsto \big( x_i/(1+|x_i|)\big)_i,$$ we may assume that $Y$ is bounded. Note that $\rho$ preserves the \Cal R-EM and the fact that for every $x\in \pi(Y)$, $\dim(\pi^{-1}(x)\cap Y)=0$.
\begin{claim}\label{cl 5}
Let $(Y_{t_i})_{i}$ be a decreasing sequence (of graphs of functions over $W$) and let $A$ be its pointwise limit (which exists because $Y$ is bounded). Then:\begin{enumerate}
\item $A$ is the graph of a continuous function,
\item $\cl{\bigcup_{i}Y_{t_i}}\cap \pi^{-1}(W)\sm \bigcup_i Y_{t_i}=A$,
\item $\lim_H(\cl{Y_{t_i}})=\cl{A}$.
\end{enumerate} 
\end{claim}
\begin{proof}
For the first property, let $x\in W$ and let $a,b\in \cl{A}\cap \pi^{-1}(x)$. We show that $a=b$. Since  $(Y_{t_i})_i$ is a decreasing sequence of graphs of continuous functions, by the Intermediate Value Property, $[a,b]\subseteq \cl{\bigcup_{i}Y_{t_i}}\subseteq Y$. If $a\neq b$, that is a contradiction with  the fact that $\dim(\pi^{-1}(x)\cap Y)=0$. The second property is obvious and the third is just a consequence of Corollary \ref{Hlim=cl}. 
\end{proof}

\par Let $$H=\cl{\{\cl{Y_{t}\cap \pi^{-1}(W)}:\; t\in \R^k\}}^H.$$ By Fact \ref{Hausdorff fam}, $H$ is a family definable in $\cal R$, thus it has the form $\{X_a:\; a\in B\}$. Let $$H_P=\{a\in B:\; X_a\cap \pi^{-1}(W)\subseteq Y \text{ and is the graph of a continuous function over $Z$}\}.$$  
Since, by Claim \ref{cl 5} for $a\in H_P$, $X_a\cap \pi^{-1}(W)$ is uniquely determined by $\pi^{-1}(z)\cap X_a$ for any $z\in W$ and that by assumption, $\pi^{-1}(z) $ has dimension $0$, we may assume that $H_P$ has dimension $0$.
 Thus, we have  $$Y\cap \pi^{-1}(W)=\bigcup_{a\in H_P}X_a\cap \pi^{-1}(W)$$
 and we have the result for $\pi^{-1}(W)\cap Y$. 
\bigskip 
 \par It is easy to see that we can do this uniformly   $t\in S'$ and $Y\cap \bigcup_{t\in S'}\pi^{-1}(W_t)$ has a decomposition into $\cal R$-EM.  

\bigskip
\par {\em \textbf{Case $2$: \cal R is semibounded:}}

  \smallskip
\par  By Proposition \ref{lm sb},we may assume that  there are a vector subspace $V$, and a family $\{B_t:\; t\in A'\}$ such that:
\begin{enumerate}
\item $\{|B_t|:\; t\in A'\}$ is bounded,
\item for every $t\in A'$, $Y_t+V=B_t+V$,
\item $\dim(Y_t+V)=\dim(Y_t)=n$,
\item For every box $C\subseteq \pi(Y_t)$, $\pi^{-1}(C)\cap Y_t+V\subseteq Y_t$.
\end{enumerate}  
\begin{claim}\label{cl000}
There is a family $\{B'_t:\; t\in A'\}$ such that: 
\begin{enumerate}
\item for every $t\in A'$ $\dim(B'_t)=\dim Y_t=n$,
\item there is a bound $\rho$ on $\{|B'_t|:\; t\in A'\}$,
\item for every $t,t'\in A'$ if $\pi(Y_t)=\pi(Y_{t'})$ then $\pi(B'_t)=\pi(B'_{t'})$.
\item for every $t\in A'$, $Y_t+V$ is the graph of a continuous function over $\pi(Y_t+V)$. 

\end{enumerate}
\end{claim} 
\begin{proof}Let $\rho'$ be the bound on $\{|B_t|:\; t\in A'\}$.
\par By definable choice in o-minimal structures, there is a family $\{x_t:\; t\in A\}$ such that $x_t\in W_{t}$. For $t\in A$, let $B''_t=\cal B(x_t,2\rho')$. For $t'\in A'$ we denote by $\pi(t')$, $t\in A$ such that $\pi(Y_{t'})=W_{t}$. let $$B'_{t'}=\pi^{-1}(B''_{\pi(t')})\cap Y_{t'}+V.$$
\par We show that $\{B'_t:\; t\in A'\}$ satisfies all the properties we need. The first property is obvious. For the second property, we first observe that since $\dim(Y_t)=\dim(Y_t+V)=n$, $\pi(V)\simeq V$. 
 Let $t\in A'$, let  $x,y\in B'_t$. By Proposition \ref{lm sb}, $\pi^{-1}(x_{\pi(t)})\cap Y_t$ belongs to some translate $B_t+v$ for some $v\in V$.  There are also $v_x,v_y\in V$, $b_x,b_y\in B_t+v$ such that $x=b_x+v_x$ and $y=b_y+v_y$. Thus $$|x-y|\leq |b_x-b_y|+|v_x-v_y|.$$
 Since $$|b_x-b_y|<\rho'$$ and that $$v_x-v_y\in V\cap \pi(-1)(\cal B(0,2\rho'))$$ (which is bounded since $V\simeq \pi(V)$), there is a bound on $\{|B'_t|:\; t\in A'\}$.
 \medskip
 \par The third property is obvious. For the fourth, since $\pi^{-1}(Y_t)\cap Y_t+V=Y_t$, $\pi(B_t)\simeq B_t$. By proposition \ref{lm sb} (4), $Y_t+V$ is the disjoint union of translates of $B_t$ by elements of $V$ there are no $x,y\in Y_t+V$ such that $\pi(x)=\pi(y)$. Thus, since $Y_t+V$ is connected (since $Y_t$ and $V$ are) we have the result.
\end{proof}
For every $t\in S$, $\pi^{-1}(W_{t})\cap Z$ is composed of connected components which are graphs of continuous functions over $W_{t}$. Thus we introduce the notation following Claim \ref{cl 00}. For $Y_{t}, Y_{t'}$ such that $\pi(Y_{t})=\pi(Y_{t'})$. We denote by $Y_{t}<Y_{t'}$ if there is $z\in\pi(Y_{t})$ such that $\pi_m(\pi^{-1}(z)\cap Y_{t})<\pi_m(\pi^{-1}(z)\cap Y_{t'})$ (and by Claim \ref{cl 00} of Theorem \ref{L-DEM} it is true for every $z\in \pi(Y_{t})$). 

 \begin{claim}\label{cl111}
 Let $(t_n)_n$ be a sequence in $S'$ such that there is $t\in S $ with  $\pi(Y_{t_n})=W_{t}$ for every $n$, $(Y_{t_n})_n$ is a decreasing sequence such that there is $a\in \cl{\bigcup_n Y_{t_n}}\sm \bigcup_nY_{t_n}$. Then 
 \begin{enumerate}
 \item $(B'_{t_n})_n$ converges pointwise to $A$, the graph of a continuous function over $\pi(B'_{t_n})$ (for any $n$).
 
 \item $Y_{t_n}+V\to_nA+V$ pointwise and $A+V$ is the graph of a continuous function. 
 \end{enumerate}
 \end{claim}
 \begin{proof}
 First of all, by translation, we may assume that $\pi(a)\in \pi(B'_{t_n})$. Since there is a uniform bound $\delta$ on the $B'_{t_n}$, it is easy to see that $A\subseteq \cal B(a,\delta)$. Since $(B'_{t_n})_n$ is a decreasing sequence that is bounded from below, $\bigcup_{n}B'_{t_n}$ is contained in a bounded set and we may apply Claim \ref{cl 5} to get the first part. 
 \par The second part is coming from the fact that  for any $x\in W +\pi(V)$, there is $c\in V$ and an open neighbrohood $x\in C\subseteq W+\pi(V)$ such that    for every $n$:
$$\pi^{-1}(C)\cap Y_{t_n}+V\subseteq c+B_{t_n}.$$
Thus, we have the result by translation.
  \end{proof}
 As in the case where $\cal R$ expands the real field, it is sufficient to prove that the family of pointwise limits of $\{Y_t:\; t\in S'\}$ is definable. Moreover, by Claim \ref{cl000}.(4), $$B'_t+V\cap \pi^{-1}(W_{\pi(t))})=Y_t.$$ Thus  it is sufficient to prove that the family of Hausdorff limits of $\{\cl{B'_t}:\; t\in S'\}$ is definable.

\par For $t\in A'$, $a\in \R^m$, let $E_{t,a}=B_{t}\cap \cal B(a,3\rho)$. Let 
$$\cal W=\{\cl{E_{t,a}-a}:\; t\in A',a\in \R^m\}.$$
It is obvious that $\cal W$ is a semibounded family of compact sets and that all its members are contained in $\cal B(3\rho)$.    Let $H=\{X_t:\; t\in F\}$ be the set of Hausdorff limits of $\cal W$ (that is definable in \Cal R). Let  
 $$\cal A=\{(X_t\cap \pi^{-1}(B''_{t'}-\pi(a))+a+V\cap \pi^{-1}(W_{t'}):\; t'\in A,t\in F,a\in \R^m\}$$
 and let $$\cal A'=\{(X_t\cap \pi^{-1}(B''_{t'})-\pi(a))+a +V\cap \pi^{-1}(W_{t'}):\; t\in S',t\in F,a\in \R^m $$ $$\text{ such that }X_t+a+V\cap \pi^{-1}(W_{t'})\subseteq Y\}.$$
 It is easy to see that $\cal A'$ is a small subfamily of $\cal A$ and that $Y\cap \pi^{-1}(W_t)=\bigcup \cal A'$ and we have the result. As in the case where \Cal R expands the real field, it is easy to see that we can do this construction uniformly for $t\in S'$ and we have the result.
\bigskip
\par \textbf{\underline{\em case $m=n$:}}
\smallskip
\par We assume that $X=\intr(X)$. By induction, there is a decomposition of $\bd(X)$ into $\cal R$-EM. Let  $ s\in \N$ such that for $i\leq s $,  $\bigcup_{t\in S^i} Z^i_{t}$ is an $\cal R$-EM of dimension $m-1$ coming from the  family $\{Z^i_t:\; t\in A_i\}$ that is definable in \Cal R and $$\dim\bigg(\bd(X)\setminus \bigcup_i Z^i\bigg)<n-1.$$
We first need the following claim:
\begin{claim}
Let $Z$ be a definable set of dimension $s<m$. Then there is a decomposition of $Z$ into finitely many \cal R-EM $Z_1,\ldots, Z_a$ such that, for $\{Z_{i,t}:\, t\in S_i\}$ the \cal R-decomposition of $Z_i$ and $\pi$ the projection on the first $m-1$-coordinates: $$(*):\; \text{for every } i,j,t\in S_i,t'\in S_j\, \pi(Z_{i,t})\cap \pi(Z_{j,t'})\text{ either is empty or is equal.}$$  
\end{claim}
\begin{proof}
Let $W=\cl{\bigcup_{i,t\in S_i}\bd(\pi(Z_{i,t}))}$. Observe that $W$ has dimension lower than $m-1$. Let $W_1,\ldots,W_l$ be some decomposition into \Cal R-EM of $\R^{m-1}\sm W$. Then for $\{W_{i,t}:\; t\in S'_i\}$ an \Cal R-decomposition of $W_i$, the family $\{\pi^{-1}(W_{i,t}\cap Z_{j,t'}):i,j,t\in S'_i,t'\in S_j\}$ satisfies $(*)$. We deal with $\pi^{-1}(W)\cap Z$ easily by induction.
\end{proof}
Thus, we may assume that the $Z_i$'s satisfies $(*)$  and let $W_1,\ldots, W_j$ be an \cal R-decomposition of $\R^{m-1}$ that witnesses it and we may assume that $\bigcup_i W_i=\R^{m-1}$.
 Moreover, since $\bd(X)$ is closed, $$\pi^{-1}(\pi(Z^i_t))\cap \bd(X)$$ is relatively closed in $$\pi^{-1}(\pi(Z^i_t)).$$  
\par  Let $B'$ be the union of the $W_i$ of dimension lower than $m-1$. and let $B=\pi^{-1}(\cl{B'})\cap \bd(X)$.  Note that $B$ has dimension lower than $m$. We define $\phi:\R^m\setminus \big(\bd(X)\cup B\big)\to \R$ as follow:
$$\phi(y)=\sup\bigg\{\pi_m\big(\{\pi(y)\}\times(-\infty, y_m)\cap \bd(X)\big)\bigg\}$$
and $\delta:\R^m\setminus \big(\bd(X)\cup B\big)\to \R$ as follow:
$$y \mapsto s \in S_i\text{ such that $Z^i_{s}$ contains $\phi (y)$}.$$
Note that $\phi$ is well defined because $\bd(X)$ is relatively closed in $\bigcup_{i\leq s}\pi^{-1}(\pi(Z^i))$.
\par For $y\in \R^m\setminus \big(\bd(X)\cup B\big)$, we  define:$$\Delta(y)=\delta^{-1}(\delta (y)).$$

\par We note that $ \bd(X) \cup B$ has lower dimension than $m$ and that it contains $\bd(X)$. Therefore, if $y\in X$ then $\Delta(y)\subseteq X $. Moreover, since $\Delta(y)$ only depends on $\delta(y)$, the family $$\Big\{\Delta(y):\; y\in  \R^m\setminus  \bigcup_i \bigg(\bd(X)\cup B\bigg)\Big\}$$ is small. 

It is also a  subfamily of  $$\Big\{\{y\in \R^m:\; y\in \intr\big([Z^i_{t_{1}},Z^j_{t_{2}}]\big)\}:\; t_{1}\in A_i, t_{2}\in A_j\Big\}$$
which is obviously definable in \Cal R. 
\par Since  $ \bigcup_i \bigg(\cl{Z^i}\cup B^i\bigg)$ has lower dimension than $m$, we apply the induction hypothesis to get the result. 
\end{proof}

\begin{corollary}
We assume that $\WR$ is d-minimal. Let $X$ be a definable set then there is a finite decomposition of $X$ into some \cal R-EM $X_1,\ldots,X_n$ such that for every $i$ there is some \cal R-decomposition $\{X_t:\; t\in S\}$ of  $X_i$ and for every $t\in S$, $X_t$ is relatively open in $X_i$
 \end{corollary}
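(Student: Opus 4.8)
The plan is to bootstrap from Theorem~\ref{main}. Write $X=X_1\sqcup\cdots\sqcup X_k$ with each $X_i$ an $\cal R$-EM, fix one of them, call it $X'$, with $\cal R$-decomposition $\{X'_t:\,t\in S\}$ and associated projection $\pi$; it suffices to refine this single $X'$ into finitely many $\cal R$-EM, each carrying an $\cal R$-decomposition by relatively open pieces, and then reassemble. I would first observe that, since distinct values of $\pi(X'_t)$ are disjoint open sets, $X'$ is already partitioned by the relatively open subsets $\pi^{-1}(W)\cap X'$, for $W$ ranging over these bases; hence $X'_t$ is relatively open in $X'$ exactly when its sheet is ``isolated'' over its base $W$, i.e.\ no sequence of other sheets of $X'$ over $W$ accumulates onto $\Gamma(g_t)$. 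Thus the whole problem is local over a fixed open connected base $W$, where we are handed a small, $\cal R$-definable, totally ordered family of pairwise non-crossing continuous graphs $\{g_t:\,t\in T\}$ — the order being independent of the point of $W$, by the Intermediate Value argument of Claim~\ref{cl 00} — and we are moreover allowed to cut $W$ further into the base sets of an $\cal R$-EM.

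Next I would feed the family of fibres $\{\pi^{-1}(z)\cap X':\,z\}$, a definable family of $0$-dimensional sets, into d-minimality, obtaining a definable partition of each fibre into finitely many discrete sets $D_1(z),\dots,D_r(z)$. Fixing a base point $z_0\in W$ by definable choice, this partitions the sheets over $W$ as $T=T_1\sqcup\cdots\sqcup T_r$ according to which $D_j(z_0)$ contains $g_t(z_0)$, hence partitions $X'$ into definable pieces $Y^j=\bigcup_{t\in T_j}X'_t$ (uniformly over the bases $W$). For fixed $j$ and $t\in T_j$, I claim $g_t$ can fail to be isolated in $Y^j$ only over a thin subset of $W$: if distinct sheets $g_{t_\nu}\in Y^j$ accumulated onto $\Gamma(g_t)$ at a point, one passes, using the base-independent order and non-crossing, to a subsequence monotone over all of $W$, applies Corollary~\ref{Hlim=cl}, and gets a continuous limit graph $h\geq g_t$ with $h=g_t$ at that point; but $h$ cannot equal $g_t$ over all of $W$, since then $g_{t_\nu}(z_0)\to g_t(z_0)\in D_j(z_0)$ would contradict discreteness of $D_j(z_0)$. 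So the bad locus of $t$ is contained in $\{z\in W:\,h(z)=g_t(z)\}$ for limit graphs $h\neq g_t$, and by Theorem~\ref{Hausdorff fam} these limit graphs come from an $\cal R$-definable family.

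Finally I would collect the bad loci over all $t$ into a single definable set $Z^j\subseteq Y^j$, namely $Z^j=\bigcup_{t\in T_j}\bigl(X'_t\cap\overline{Y^j\setminus X'_t}\bigr)$, prove $\dim Z^j<\dim Y^j$, and invoke the induction on dimension on $Z^j$; on $Y^j\setminus Z^j$ every sheet is relatively open by construction, and one application of Claim~\ref{cl10}, together with Fact~\ref{pi good} and Lemma~\ref{DIM} to keep the base cuts small, repackages $Y^j\setminus Z^j$ as finitely many $\cal R$-EM with relatively open $\cal R$-decompositions. Reassembling over $j$ and over the finitely many $X_i$ is routine, since finite disjoint unions and the relatively-open property are compatible with the $\cal R$-EM bookkeeping. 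The hard part is the dimension estimate $\dim Z^j<\dim Y^j$: one must rule out that the set of points where some sheet of $Y^j$ is a limit of others is ``thick'', which is where d-minimality — through discreteness of the fibres — must be combined with the Hausdorff-limit results (Corollary~\ref{Hlim=cl}, Theorem~\ref{Hausdorff fam}), essentially the mechanism of the $n=m-1$ case of Theorem~\ref{L-DEM} but now tracking which limit sheets survive inside the chosen discrete layer.
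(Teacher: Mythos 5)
There is a genuine gap, and it sits exactly where you flag it. Your layering of the sheets by their value at a single base point $z_0$ (putting $t$ into $T_j$ when $g_t(z_0)\in D_j(z_0)$) only buys you discreteness of the fibre of $Y^j$ \emph{over $z_0$}: at other points $z\in W$ the values $g_t(z)$, $t\in T_j$, need not lie in a single discrete layer, since membership in $D_j(z)$ can change with $z$. Consequently nothing prevents a decreasing sequence of sheets of $Y^j$ from converging to $g_t$ over a subset of $W$ with nonempty interior while their values at $z_0$ stay spread out inside the (possibly infinite) discrete set $D_j(z_0)$; your containment of the bad locus in $\{z:\,h(z)=g_t(z)\}$ does not help, because two distinct continuous functions can agree on a set with interior. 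So the dimension estimate $\dim Z^j<\dim Y^j$, which you explicitly leave as ``the hard part'', is not a routine repetition of the $n=m-1$ mechanism in your setup -- it is the whole content of the statement, and as the argument stands it is neither proved nor clearly true without further subdivision.

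The paper avoids this entirely by decomposing \emph{fibrewise} rather than by a base point: d-minimality applied to the definable family of fibres $\{\pi^{-1}(z)\cap X:\,z\}$ splits $X$ into finitely many definable pieces each of whose fibres over \emph{every} $z$ is discrete (subsets of discrete sets stay discrete), and Theorem \ref{main} is then reapplied to each piece to restore the $\cal R$-EM structure. With discreteness at every base point, no sheet can be an accumulation of other sheets anywhere: if points of other sheets converged to a point of $\Gamma(g_t)$ over $x$, the non-crossing order plus extraction of a monotone subsequence and the continuity of the pointwise limit (the Claim \ref{cl 5} argument, after using Fact \ref{pi good} to keep closure fibres small) would force the fibre at $x$ to accumulate at $g_t(x)$, contradicting discreteness. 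Hence every sheet is relatively open outright, there is no residual bad locus, and no extra induction on a set $Z^j$ is needed. To repair your proof, replace the $z_0$-classification by this fibrewise decomposition; the rest of your outline then collapses to the paper's two-line argument.
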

 \begin{proof}
 By d-minimality, we may assume that $X$ is an EM such that for every $x\in \pi(X)$ (for $\pi$ some projection onto $\R^{\dim(X)}$), $\pi^{-1}(x)\cap X$ is discrete. The result follows directly.
 \end{proof}
\section{Discussion and questions}
 In this section we first discuss some consequences of Theorem \ref{L-DEM}. Then we present some related questions and discuss their likelyness. 

 \subsection{No new smooth functions in the d-minimal setting}
 Theorem \ref{main2} answers to a question asked in \cite{ES}. As a consequence of \cite{ES} and \cite{ES2} we get the following theorem:
 \begin{theorem}
 Let $\WR$ be d-minimal. If $f$ is a smooth  function definable in $\WR^\#$, over a domain that is definable in \Cal R then $f$ is definable in $\cal R$. 
 \end{theorem}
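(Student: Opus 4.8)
The plan is to deduce the statement from Theorem~\ref{main2} and the results of \cite{ES,ES2}, using the graph $\Gamma(f)$ as a bridge. One first reduces to the case that the domain $U$ of $f$ is an open cell in $\R^n$: since $U$ is definable in $\Cal R$, $\Cal R$-cell decomposition of $U$ reduces us to a single cell, and a cell of dimension $k<n$ is $\Cal R$-definably $C^p$-diffeomorphic (for $p$ as large as wanted) to an open subset of $\R^k$, so composing $f$ with such a parametrisation — which preserves enough differentiability — brings us to an open cell, at the cost of lowering the number of variables. So assume $U\subseteq\R^n$ is open, and let $\pi\colon\R^{n+1}\to\R^n$ be the projection onto the first $n$ coordinates; then $\pi$ restricts to a homeomorphism $\Gamma(f)\to U$ and $\dim\Gamma(f)=n$.

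I would then apply Theorem~\ref{main2} to $\Gamma(f)$, which is definable in $\WR^\#$, getting a decomposition $\Gamma(f)=X_1\sqcup\cdots\sqcup X_r$ into \Cal R-EM and, for each $i$, an \Cal R-decomposition $\{X_{i,t}:t\in S_i\}$ of $X_i$ whose members are relatively open in $X_i$. Fix $i$ with $\dim X_i=n$ and $t\in S_i$. As a single member of the family definable in $\Cal R$ from which it comes, $X_{i,t}$ is itself definable in $\Cal R$; and since $X_{i,t}\subseteq\Gamma(f)$ and $\pi$ is injective on $\Gamma(f)$, one has $X_{i,t}=\{(x,f(x)):x\in\pi(X_{i,t})\}$. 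Hence $f$ agrees on $\pi(X_{i,t})$ with a function definable in $\Cal R$, and $\pi(X_{i,t})$ is an $n$-dimensional set definable in $\Cal R$, so $\intr(\pi(X_{i,t}))\neq\es$.

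Next I would assemble these local descriptions. Set $U_0=\bigcup\{\intr(\pi(X_{i,t})):\dim X_i=n,\ t\in S_i\}$, an open set on each piece of which $f$ coincides with a function definable in $\Cal R$. The complement of $U_0$ in $U$ is contained in $\pi\!\left(\bigcup_{\dim X_i<n}X_i\right)\cup\bigcup\{\bd(\pi(X_{i,t})):\dim X_i=n,\ t\in S_i\}$. The first set has dimension $<n$ because $\pi$ restricts to a homeomorphism on $\Gamma(f)$; and for each $i$ with $\dim X_i=n$ the sets $\bd(\pi(X_{i,t}))$, $t\in S_i$, form a small subfamily of a family definable in $\Cal R$ all of whose members have dimension $<n$, so by Lemma~\ref{DIM} their union has dimension $<n$. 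Thus $\dim(U\sm U_0)<n$, so $U_0$ is dense and open in $U$. Since the $X_{i,t}$ partition $X_i$ into relatively open connected cells they are precisely its connected components, so the above is exactly the uniform definability of the connected components of the special manifolds making up $\Gamma(f)$ that is asked for in \cite{ES}; feeding $f$ together with this data into the main results of \cite{ES,ES2} — which turn ``$f$ smooth and locally definable in $\Cal R$ on a dense open subset of its domain'' into ``$f$ definable in $\Cal R$'' — concludes the argument.

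The real content is entirely in Theorem~\ref{main2}: the role of \cite{ES,ES2} is to run the rigidity/continuation argument showing that a smooth function which is locally $\Cal R$-definable everywhere on a dense open set is globally $\Cal R$-definable, and for that one genuinely needs the pieces $X_{i,t}$ to be relatively open in their \Cal R-EM (equivalently, to be the connected components), so that the local $\Cal R$-definable descriptions of $f$ form an honest open cover — the weaker Theorem~\ref{main} would not suffice. I therefore do not expect a serious obstacle in the argument above, beyond (i) keeping track of differentiability in the reduction to an open domain and (ii) verifying that the ``dense open local $\Cal R$-definability'' conclusion matches verbatim the hypothesis under which \cite{ES,ES2} apply; (ii) is the point most in need of care.
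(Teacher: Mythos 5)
Your proposal is essentially the paper's own argument: the paper offers no detailed proof, simply deriving the theorem as a consequence of Theorem~\ref{main2} (which supplies the uniform definability of the connected components, answering the question posed in \cite{ES}) combined with the results of \cite{ES} and \cite{ES2}, which is exactly the route you take, with the application of Theorem~\ref{main2} to $\Gamma(f)$ spelled out. Your only genuinely unverified point is the precise form of the hypotheses in \cite{ES,ES2} (your caveat (ii)), but the paper itself leaves that to those references as well.
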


 \subsection{NIP}
We just say a word on NIP in the $P$-internal  setting. We thank Erik Walsberg for pointing this out.
\begin{defn}
We say that $\WR$ is $P$-internal if for any small set $S\subseteq \R^n$ there is a function, definable in \Cal R $f:\R^m\to  \R^n$ such that $S\subseteq f(P^m)$. 
Let $P_{ind}$ be the structure induced on $P$ by $\WR$.
 Let $\la\cal A, B\ra$  be a structure such that the language of $\cal A$ is $\cal R'$ and $B\subseteq A$ where $A$ is the underlying set of \Cal A. A formula is said to be $B$-bounded if it has the form $$Q_0x_0\in B,\ldots, Q_n x_n\in B\, \phi (x_0,\ldots,x_n)$$ where $Q_i\in \{\exists,\forall\}$ and $\phi$ is an $\cal R'$-formula.
\par We say that the theory of $\la\cal A, B\ra$ is bounded if every formula is $B$-bounded.
\end{defn} 
\medskip

\begin{proposition}\label{P(A) def}
Let $\WR$ be i-minimal and $P$-internal. Then $\WR$ is NIP if and only if $P_{ind}$ is.  
\end{proposition}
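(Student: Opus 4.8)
The plan is to prove the two implications separately, the forward one being soft and the backward one resting on the structure theorem of this paper together with a transfer principle for bounded pairs.

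For ``$\WR$ NIP $\Rightarrow$ $P_{ind}$ NIP'' no hypothesis beyond NIP of $\WR$ is needed. By definition $P_{ind}$ is the structure induced on the $\WR$-definable set $P$, hence it is interpretable in $\WR$ and $\mathrm{NIP}$ is preserved under interpretation; concretely, every $P_{ind}$-relation is a trace $\varphi(P^n;\bar b)$ of a $\WR$-formula, and the VC dimension of the family $\{\varphi(P^n;\bar b):\bar b\}$ is bounded by that of $\{\varphi(\R^n;\bar b):\bar b\}$.

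For the converse, the first step is to show that, under the standing hypotheses (i-minimality, sparseness, definable choice) together with $P$-internality, the pair $\la\cal R,P\ra$ is \emph{bounded}: every $\WR$-formula is equivalent to a $P$-bounded one. I would start from Theorem \ref{main}, more precisely its family version Theorem \ref{L-DEM}, applied to the $\WR^\#$-definable family $\{\varphi(\R^n;\bar b):\bar b\}$ (legitimate since $\WR^\#$ expands $\WR$): uniformly in $\bar b$ one obtains $\varphi(\R^n;\bar b)=\bigcup_{t\in S_{\bar b}}X_{\bar b,t}$, where $\{X_{\bar b,t}:t\in A_{\bar b}\}$ comes from an $\cal R$-definable family of pairwise disjoint cells and each index set $S_{\bar b}$ is small. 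Then $P$-internality gives an $\cal R$-definable $f$ with $S_{\bar b}\subseteq f(P^{m},\bar b)$, and a dimension induction on the small index sets (aimed at \emph{expressing} them, not merely bounding them, by traces of $\cal R$-definable sets on powers of $P$) lets one rewrite $\bar a\in\varphi(\R^n;\bar b)$ as $\exists\bar s\in P^{m}\;\psi(\bar a,\bar s,\bar b)$ with $\psi$ an $\cal R$-formula. Hence $\varphi$ is $P$-bounded, and since $P$-bounded formulas are closed under Boolean combinations the theory of $\la\cal R,P\ra$ is bounded.

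The second step is the transfer: if $\la\cal A,B\ra$ is a bounded pair, $\cal A$ is NIP and the induced structure $B_{ind}$ is NIP, then $\la\cal A,B\ra$ is NIP. We apply it with $\cal A=\cal R$ (NIP, being o-minimal), $B=P$, $B_{ind}=P_{ind}$. The mechanism is that in a $P$-bounded formula $Q_1 s_1\in P\cdots Q_k s_k\in P\;\psi(\bar x,\bar s,\bar b)$ the tuple $\bar s$ ranges over $P$ and enters only through the $\cal R$-formula $\psi$, hence only through the $P_{ind}$-type of $\bar s$ over the parameters, so a Sauer--Shelah count combines the VC bound from NIP of $\cal R$ on the $\bar x$-side with the one from NIP of $P_{ind}$ on the $\bar s$-side. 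I expect the real work to lie here and in the inductive recovery of the small index sets: making the transfer precise goes through externally definable sets / honest definitions in NIP theories (or a direct VC-density estimate), and one must also check that the passage from Theorems \ref{main}--\ref{L-DEM} and $P$-internality to boundedness is genuinely uniform in $\bar b$.
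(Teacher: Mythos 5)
Your proposal is correct and takes essentially the same route as the paper: the easy direction is the same interpretation argument, and for the converse the paper likewise obtains boundedness of the pair from $\cal R$-DEM together with $P$-internality and then applies exactly the transfer principle you sketch, which is Chernikov--Simon's result (\cite{SiCher}, Corollary 2.5), so it can simply be cited rather than reproved. If anything, your treatment of the boundedness step is more detailed than the paper's, which asserts it in one line.
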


\begin{proof}
The left-to-right direction is obvious. For the right-to-left direction, it is just an application of   Chernikov and Simon's \cite[Corollary 2.5]{SiCher}  that states that  if $Th(\cal R)$ and $P_{ind}$ are NIP and $Th(\la\Cal R,P\ra)$ is bounded then $Th(\la\cal R,P\ra)$ is NIP. The boundedness being fulfilled by \cal R-DEM and by $P$-internality, we have the result. 
\end{proof}

\subsection{Questions}
All the known d-minimal structures are reducts of structures of the form $\la\cal R,P\ra^\#$  (for \cal R, an o-minimal structures and $P \subseteq \R$ a set of dimension $0$. Thus  they satisfy some weak form of Theorem \ref{main2}. It is natural to ask wether every d-minimal structures has this form or not. 
 \begin{question}\label{Q1}
   For $\cal M$, a d-minimal expansion of $\la\R,<\ra$, is there an o-minimal structure $\cal M'$ and some sequence of small sets $(S_i)_{i\in I}$ such that $\cal M\subseteq \la\cal M',(S_i)_{i\in I}\ra$  and such that the last structure is d-minimal? 
   \end{question}
  The above question is quite naive and the answer is probably negative, but it could be the starting point of an interesting sequence of questions.
  One way to start looking for a counterexample to Question \ref{Q1} is to look at the following type of structures: Let $\la\Cal R,f:\R^2\to \R\ra$ be o-minimal and let $P\subseteq \R$ be a set of dimension $0$ such that $\la\Cal R,P\ra$ is d-minimal and $\la\cal R,f,P\ra$ is not. 
  \begin{question}\label{Q2}
  Is there such $\Cal R$, $f$ and $P$ such that $\la\cal R,f\upharpoonright P\times \R\ra$ is d-minimal?
  \end{question}
\par We finish by taking a look at two examples around Question \ref{Q2}. The structure $$\la\R,<,+,\big(f:\Z\times\R\to \R,(t,x)\mapsto tx\big)\ra$$ is not d-minimal. Take:   $$\bigcup_{t\in \Z}f_t^{-1}(\Z)=\Q.$$ Moreover we show that the multiplication is definable on $\Q$. First, the multiplication is definable on $\Z^2$: take $(x,y)\mapsto f_x(y)$. For $(x,y), (z,t)\in \Z^2$ let $(x,y)*(z,t)$ be the multiplication coordinate by coordinate. Let  $g:(x,y)\in \Z^2\mapsto f_y^{-1}(x)$. Let $h:z\in \Q\mapsto (x,y)\in \Z^2$ such that $y$ is the smallest element of $\N$ with  $g(x,y)=z$.  We define the  multiplication on $\Q^2$ by $$(x,y)\mapsto g(h(z)*h(z')).$$ 
\par Therefore the full multiplication on $\R^2$ is definable by taking the closure of the graph of the multiplication on $\Q^2$ and thus $$\la\R,<,+,\big(f:\Z\times\R\to \R,(t,x)\mapsto tx\big)\ra$$ defines the whole projective hierarchy (see for example \cite{kechris}).
 \smallskip
 \par  An other example in the same taste is $$\la\cl{\R},\big(f:2^\Z\times\R\to\R,(t,x)\mapsto x^t\big)\ra$$
  which defines a dense co-dense set  by taking $$\bigcup_{t\in 2^\Z}f_t^{-1}(2^\Z)=\{2^{n/2^m}:\; n,m\in \Z\}.$$ Moreover, it is easy to see that this set is $\omega$-orderable (see \cite{HW} the definition right after Theorem A in the introduction) and thus $$\la\cl{\R},\big(f:2^\Z\times\R\to\R,(t,x)\mapsto x^t\big)\ra$$ defines the whole projective hierarchy (see \cite{HW} Fact 1.2). 

\end{document}